\newtheorem{thm}{Theorem}[section]
\newtheorem{lem}[thm]{Lemma}
\newtheorem{definition}[thm]{Definition}
\newtheorem*{main}{Main Theorem}
\numberwithin{equation}{section}
\begin{document}
\title[Crooked tilings]{Affine Schottky Groups and Crooked Tilings}
\author[Charette and Goldman]
{Virginie Charette and William M. Goldman \vspace{-0.3in}}
\address{
Department of Mathematics, University of Maryland,
College Park, MD 20742  
}
\email{
virginie@math.umd.edu, wmg@math.umd.edu}
\date{\today}

\renewcommand{\P}{{\mathbb P}}
\newcommand{\E}{{\mathbb E}}
\newcommand{\Cc}{\mathcal C}  
\newcommand{\Hh}{\mathcal H}  
\newcommand{\HH}{\mathfrak H}  
\newcommand{\zz}{\mathcal Z}  
\newcommand{\R}{{\mathbb R}}
\newcommand{\Z}{{\mathbb Z}}
\newcommand{\nn}{{\partial\Nn_+}}
\renewcommand{\L}{{\mathbb L}}
\newcommand{\la}{\langle}
\newcommand{\ra}{\rangle}
\newcommand{\bX}{\Bar{X}}
\newcommand{\bD}{\Bar{\Delta}}
\renewcommand{\o}{\operatorname}
\newcommand{\hyp}{\o{hyp}}
\newcommand{\rto}{\R^{2,1}}
\newcommand{\rno}{\R^{n,1}}
\newcommand{\rn}{\R^n}
\newcommand{\rt}{\R^2}
\newcommand{\rht}{\o{H}^2_\R}
\newcommand{\g}{{\mathfrak g}}
\newcommand{\gl}{{\mathfrak{gl}}}
\newcommand{\Nn}{{\mathcal N}}
\newcommand{\SOto}{\o{SO}(2,1)}
\newcommand{\SOt}{\o{SO}(2)}
\newcommand{\Oto}{\o{O}(2,1)}
\newcommand{\SOoo}{\o{SO}^0(2,1)}
\newcommand{\SOooo}{\o{SO}^0(1,1)}
\newcommand{\rk}{\o{rank}}
\newcommand{\Map}{\o{Map}}
\newcommand{\vx}{{\mathsf{x}}}	
\newcommand{\vy}{{\mathsf{y}}}
\newcommand{\vz}{{\mathsf{z}}}
\newcommand{\vu}{{\mathsf{u}}}
\newcommand{\vv}{{\mathsf{v}}}
\newcommand{\vw}{{\mathsf{w}}}
\newcommand{\va}{{\mathsf{a}}}
\newcommand{\origin}{{\mathsf{0}}} 
\newcommand{\B}{\mathbb B }
\newcommand{\Id}{\mathbb I}
\newcommand{\xo}[1]{{\vx}^0(#1)}
\newcommand{\xp}[1]{{\vx}^+(#1)}
\newcommand{\xm}[1]{{\vx}^-(#1)}
\newcommand{\xpm}[1]{{\vx}^{\pm}(#1)}
\newcommand{\xmp}[1]{{\vx}^{\mp}(#1)}
\newcommand{\Stem}{{\mathcal{S}}}
\newcommand{\Ss}{{\mathfrak{S}}}
\newcommand{\Wing}{{\mathbf{Wing}}}
\newcommand{\nWing}{{\mathbf{nWing}}}
\newcommand{\Wingp}{{\mathcal W}^+}
\newcommand{\Wingm}{{\mathcal W}^-}
\newcommand{\Wingpm}{{\mathcal W}^\pm}
\newcommand{\nWingp}{{\mathcal M}^+}
\newcommand{\nWingm}{{\mathcal M}^-}
\newcommand{\nCP}{{\mathcal K}}
\newcommand{\csch}{\o{csch}}
\newcommand{\sech}{\o{sech}}
\newcommand{\Isom}{\o{Isom}}
\newcommand{\Isomo}{\Isom^0}
\newcommand{\is}{\Isom(\E)}
\newcommand{\iso}{\Isomo(\E)}
\thanks{The authors gratefully acknowledge partial support from NSF grant
DMS-9803518. Goldman gratefully acknowledges a Semester Research Award from 
the General Research Board of the University of Maryland.}
\maketitle 
\tableofcontents 

In his doctoral thesis~\cite{Drumm0} and subsequent
papers~\cite{Drumm1,Drumm2}, Todd Drumm developed a theory of
fundamental domains for discrete groups of isometries of Minkowski
$(2+1)$-space $\E$, using polyhedra called {\em crooked planes\/} 
and {\em crooked half-spaces.\/} This paper expounds these results. 
\begin{main}
Let $\Cc_1^-,\Cc_1^+,\dots,\Cc_m^-,\Cc_m^+\subset\E$ be a family of
crooked planes bounding crooked half-spaces $\Hh_1^-,\Hh_1^+,
\dots,\Hh_m^-,\Hh_m^+\subset\E$ and 
$h_1,\dots, h_m\in\iso$ such that:
\begin{itemize}
\item any pair of the $\Hh_i^j$ are pairwise disjoint;
\item $h_i(\Hh_i^-)= \E - \Bar{\Hh}_i^+$. 
\end{itemize}
Then:
\begin{itemize}
\item the group $\Gamma$ generated by $h_1,\dots, h_m\in\iso$ acts properly
discontinuously on $\E$;
\item the polyhedron
\begin{equation*}
X = \E - \bigcup_{i=1}^m (\Bar{\Hh}_i^+ \cup \Bar{\Hh}_i^-)
\end{equation*}
is a fundamental domain for the $\Gamma$-action on $\E$.
\end{itemize}
\end{main}
The first examples of properly discontinuous affine actions of
nonabelian free groups were constructed by
Margulis~\cite{Margulis1,Margulis2}, following a suggestion of
Milnor~\cite{Milnor}. (Compact quotients of affine space were
classified by Fried and Goldman~\cite{FriedGoldman} about the same time.)
For background on this problem, we refer the reader to these articles
as well as the survey articles \cite{CDGM},\cite{Drumm4}.

Here is the outline of this paper. In \S\ref{sec:Minkowski}, we
collect basic facts about the geometry of Minkowski space and $\rto$.
 In \S\ref{sec:compression}, we prove a basic lemma on how
isometries compress Euclidean balls in special directions. A key idea
is the {\em hyperbolicity\/} of a hyperbolic isometry, motivated by
ideas of Margulis~\cite{Margulis1,Margulis2}, and discussed in
\S\ref{sec:hyperbolicity}, using the {\em null frames\/} associated
with spacelike vectors and hyperbolic isometries. The hyperbolicity
of a hyperbolic element $g$ is defined as the distance between the attracting
and repelling directions, and $g$ is $\epsilon$-hyperbolic if its hyperbolicity
is $\ge \epsilon$.

\S\ref{sec:Schottky} reviews Schottky subgroups of $\SOoo$ acting on
$\rht$. This both serves as the prototype for the subsequent
discussion of affine Schottky groups and as the starting point for the
construction of affine Schottky groups. For Schottky groups acting on
$\rht$, a completeness argument in Poincar\'e's polygon theorem shows
that the images of the fundamental polygon tile all of $\rht$; the
analogous result for affine Schottky groups is the main topic of this
paper.

\S\ref{sec:hypcrit} gives a criterion for
$\epsilon$-hyperbolicity of elements of Schottky groups.

\S\ref{sec:crooked} introduces {\em crooked planes\/} as the analogs 
of geodesics in $\rht$ bounding the fundamental polygon.
\S\ref{sec:extSchottky1}
extends Schottky groups to linear actions on
$\rto$. In \S\ref{sec:affineSchottky}, 
affine deformations of these linear actions are
proved to be properly discontinuous on open subsets of Minkowski
space, using the standard argument for Schottky groups. 
The fundamental polyhedron $X$ is bounded by crooked planes, in exactly
the same configuration as the geodesics bounding the fundamental polygon
for Schottky groups. The generators of the affine Schottky group pair
the faces of $X$ in exactly the same pattern as for the original Schottky 
group.

The difficult part of the proof is to show that the images $\gamma\bX$ tile
{\em all\/} of Minkowski space. Assuming that a point $p$ lies in $\E-\Gamma\bX$, Drumm
intersects the tiling with a fixed definite plane $P\ni p$. 
In \S\ref{sec:zigzags}, we abstract this idea by introducing 
{\em zigzags} and {\em zigzag regions,\/}
which are the intersections of crooked planes and half-spaces with $P$. 

The proof that $\Gamma\bX=\E$ (that is, {\em completeness\/} of the
affine structure on the quotient $(\Gamma\bX)/\Gamma$) involves a
nested sequence of crooked half-spaces $\HH_k$ containing $p$.
This sequence is constructed in \S\ref{sec:sequence} and 
is indexed by a sequence $\gamma_k\in\Gamma$.
\S\ref{sec:uniform} gives a uniform lower bound to the Euclidean width
of $\bX$. Bounding the uniform width is a key ingredient in proving
completeness for Schottky groups in hyperbolic space. 

\S\ref{sec:approx} approximates the nested sequence of zigzag regions
by a nested sequence of half-planes $\Pi_k$ in $P$. 
The uniform width bounds are used to prove Lemma~\ref{lem:tubnbhd}, 
on the existence of tubular neighborhoods $T_k$ of $\partial\Pi_k$ 
which are disjoint. The Compression Lemma~\ref{lem:affinecompression},
combined with the observation (Lemma~\ref{lem:wunu}) that the zigzag regions
do not point too far away from the direction of expansion of the $\gamma_k$,
imply that the Euclidean width of the $T_k$ are bounded below in terms of
the hyperbolicity of $\gamma_k$. (Lemma~\ref{lem:separation}). 
Thus it suffices to find $\epsilon>0$ such that infinitely many $\gamma_k$ are
$\epsilon$-hyperbolic.

\S\ref{sec:alternative} applies the criterion
for $\epsilon$-hyperbolicity derived in \S\ref{sec:hypcrit} (Lemma~\ref{lem:fxpts}) to
find $\epsilon_0>0$ guaranteeing $\epsilon_0$-hyperbolicity in many cases.
In the other cases, the sequence $\gamma_k$ has a special form, the analysis
of which gives a smaller $\epsilon$ such that every $\gamma_k$ is now
$\epsilon$-hyperbolic. The details of this constitute \S\ref{sec:change}.

We follow Drumm's proof, with several modifications.  We wish to
thank Todd Drumm for the inspiration for this work and Maria Morrill,
as well as Todd Drumm, for several helpful conversations. 

\section{Minkowski space}\label{sec:Minkowski}
We begin with background on $\rto$ and Minkowski (2+1)-space $\E$.
$\rto$ is defined as a real inner product space of dimension 3
with a nondegenerate inner product of index 1.
Minkowski space is an affine space $\E$ whose underlying
vector space is $\rto$; equivalently $\E$ is a simply-connected
geodesically complete flat Lorentzian manifold.
If $p,q\in\E$, then a unique vector $\vv\in\rto$ represents the
{\em displacement\/} from $p$ to $q$, that is, translation by the vector $\vv$ 
is the unique translation taking $p$ to $q$; we write
\begin{equation*}
\vv = q - p \text{~and~}  q = p + \vv.
\end{equation*}

Lines and planes in $\rto$ are classified in terms
of the inner product. The identity component $\SOoo$ of the group of
linear isometries of $\rto$ comprises linear
transformations preserving the {\em future\/} component $\Nn_+$
of the set $\Nn$ of timelike vectors, as well as orientation. The set of rays 
in $\Nn_+$ is
a model for {\em hyperbolic 2-space\/} $\rht$, and the geometry of
$\rht$ serves as a model for the geometry of $\rto$ and $\E$.

A key role is played by the {\em ideal boundary\/} of $\rht$.  Its
intrinsic description is as the projectivization of the lightcone in
$\rto$, although following Margulis~\cite{Margulis1,Margulis2} we
identify it with a section on a Euclidean
sphere. However, to simplify the formulas, we find it more convenient
to take for this section the intersection of the future-pointing
lightcone with the sphere $S^2(\sqrt{2})$ of radius $\sqrt{2}$, rather
than the Euclidean unit sphere, which is used in the earlier
literature. We hope this departure from tradition justifies itself by
the resulting 
simplification of notation. 

\subsection{Minkowski space and its projectivization}

Let $\rto$ be the three-dimensional real vector space $\R^3$ with the inner
product
\begin{equation*}
\B(\vu,\vv) = \vu_1\vv_1  + \vu_2\vv_2  - \vu_3\vv_3
\end{equation*}
where
\begin{equation*}
\vu = \bmatrix  \vu_1 \\ \vu_2 \\ \vu_3 \endbmatrix,
\vv = \bmatrix  \vv_1 \\ \vv_2 \\ \vv_3 \endbmatrix \in\rto.
\end{equation*}

\begin{definition}
A vector $\vu$ is said to be 
\begin{itemize}
\item {\em spacelike} if $\B(\vu,\vu) >0$,
\item {\em lightlike} (or {\em null}) if $\B(\vu,\vu)=0$,
\item {\em timelike} if $\B(\vu,\vu)<0$.
\end{itemize}
The union of all null lines is called the {\em lightcone}.
\end{definition}
Here is the dual trichotomy for planes in $\rto$:
\begin{definition}
A 2-plane $P\subset\rto$ is: 
\begin{itemize}
\item 
{\em indefinite\/} if the restriction of
$\B$ to $P$ is indefinite, or equivalently if $P\cap\mathcal N\neq\emptyset$; 
\item 
{\em null\/} if
$P\cap\Bar{\Nn}$ is a null line; 
\item {\em definite\/} if the
restriction $\B|_P$ is positive definite, or equivalently if
$P\cap\Bar{\Nn}=\{{\origin}\}$.
\end{itemize}
\end{definition}

Let $\Oto$ denote the group of linear isometries of $\rto$ and $\SOto
=\Oto \cap \o{SL}(3,\R)$ as usual.  Let $\SOoo$ denote the identity
component of $\Oto$ (or $\SOto$).  The affine space with underlying
vector space $\rto$ has a complete flat Lorentzian metric arising from
the inner product on $\rto$; we call it {\em Minkowski space\/} and
denote it by $\E$.  Its isometry group $\is$ of $\E$ splits as a
semidirect product $\Oto\ltimes V$ (where $V$ denotes the vector group
of translations of $\E$) and its identity component $\iso$ of $\is$ is
$\SOoo\ltimes V$. The elements of $\is$ are called {\em affine
isometries,\/} to distinguish them from the linear isometries in
$\Oto$. 

The linear isometries can be characterized as the affine
isometries fixing a chosen ``origin'', which is used to
identify the {\em points\/} of $\E$ with the {\em vectors\/} in $\rto$.
The {\em origin\/} is the point which corresponds to the zero vector
$\origin\in\rto$.
Let $\L:\iso\longrightarrow\SOoo$ denote the
homomorphism associating to an affine isometry its linear part.

Let $\P(\rto)$ denote the {\em projective space\/} associated to $\rto$,
that is the space of 1-dimensional linear subspaces of $\rto$. Let
\begin{equation*}
\P:\rto-\{{\origin}\}\longrightarrow\P(\rto) 
\end{equation*}
denote the quotient mapping which associates to a nonzero vector the
line it spans.

Let $\Nn$ denote the set of all timelike vectors.
Its two connected components are the convex cones
\begin{equation*}
\Nn_+ = \{\vv\in\Nn \mid \vv_3 > 0\}
\end{equation*}
and
\begin{equation*}
\Nn_- = \{\vv\in\Nn \mid \vv_3 < 0\}.
\end{equation*}
We call $\Nn_+$  the {\em future\/} or {\em positive time-orientation\/} 
of ${\origin}$.

{\em Hyperbolic 2-space\/} $\rht=\P(\Nn)$ consists of timelike lines
in $\rto$.  The distance $d(u,v)$ between two points 
$u = \P(\vu), v = \P(\vv)$ represented
by timelike vectors $\vu,\vv\in\Nn$ is given by:
\begin{equation*}
\cosh(d(u,v)) = \frac{\vert\B(\vu,\vv)\vert}{\sqrt{\B(\vu,\vu)\B(\vv,\vv)}}.
\end{equation*}
The hyperbolic plane can be identified with either one of the
components of the two-sheeted hyperboloid defined by $\B(\vv,\vv) = -1$.
This hyperboloid  is a section of 
$\P:\Nn\longrightarrow \rht$, and inherits a complete Riemannian metric
of constant curvature $-1$ from $\rto$.

The group $\SOoo$ acts linearly on $\Nn$, and thus by projective
transformations on $\rht$. It preserves the subsets $\Nn_\pm$ and acts
isometrically with respect to the induced Riemannian structures.

The boundary $\partial\Nn$ is the union of all null lines, that is
the lightcone. The projectivization of $\partial\Nn -\{ {\origin}\}$
is the {\em ideal boundary\/} $\partial\rht$ of hyperbolic 2-space. 

Let $\Ss$ denote the set of unit-spacelike vectors : 
\begin{equation*}
\Ss = \{\vv\in\rto \mid \B(\vv,\vv) = 1\} .
\end{equation*}
It is a one-sheeted hyperboloid which is homeomorphic to an annulus.
Points in $\Ss$ correspond to oriented geodesics in $\rht$, or geodesic
half-planes in $\rht$ as follows. Let $\vv\in\Ss$. Then 
define the {\sl half-plane\/} $H_\vv = \P(\tilde{H}_\vv)$ where
\begin{equation*}
\tilde{H}_\vv = \{\vu\in\Nn_+ \mid \B(\vu,\vv) > 0\}.
\end{equation*}
$H_\vv$ is one of the two components of the complement 
$\rht - l_\vv$ where
\begin{equation}\label{eq:spaceline}
l_\vv  = \P(\{\vu\in\Nn_+ \mid \B(\vu,\vv) = 0\}) = \partial H_\vv 
\end{equation}
is the geodesic in $\rht$ corresponding to the line $\R\vv$ spanned by $\vv$.

\subsection{A little Euclidean geometry}\label{sec:Euclid}
Denote the Euclidean norm by
\begin{equation*}
\Vert\vv\Vert = \sqrt{(\vv_1)^2 + (\vv_2)^2 + (\vv_3)^2}
\end{equation*}
 and let $\rho$ denote the Euclidean distance
on $\E$ defined by
\begin{equation*}
\rho(p,q) = \Vert p - q\Vert .
\end{equation*}
If $S\subset\E$ and $\delta>0$, the 
{\em Euclidean $\delta$-neighborhood of $S$\/} is
$B(S,\delta) = \{y\in \E \mid \rho(S,y) < \delta\}$.
Note that $B(S,\delta) = \bigcup_{x\in S}B(x,\delta)$.

Let
\begin{equation*}
S^2(\sqrt{2}) = \{\vv\in\rto \mid \Vert\vv\Vert = \sqrt{2} \}
\end{equation*}
be the Euclidean sphere of radius $\sqrt{2}$.
Let $S^1$ denote the intersection $S^2(\sqrt{2})\cap\partial\Nn_+$, 
consisting of points
\begin{equation*}
\vu_\phi = \bmatrix \cos(\phi) \\ \sin(\phi) \\ 1 \endbmatrix 
\end{equation*}
where $\phi\in\R$. 
The subgroup of $\SOoo$ preserving $S^1$ and $S^2(\sqrt{2})$ is the
subgroup $\SOt$ consisting of rotations
\begin{equation*}
R_\phi = \bmatrix  
\cos(\phi) &  -\sin(\phi) & 0 \\
\sin(\phi) &   \cos(\phi) & 0 \\
0 	&   0 	& 1 \endbmatrix.
\end{equation*}
While the linear action of $\SOoo$ does not preserve 
$S^1$, we may use the identification of $S^1$ with 
$\P(\nn)$ to define an action 
of $\SOoo$ on $S^1$. 
If $g\in\SOoo$, we denote this action
by $\P(g)$, that is, if $\vu\in S^1$, then
$\P(g)(\vu)$ is the image of $\P(g(\vu))$ under the identification
$\P(\nn)\longrightarrow S^1$.
Throughout this paper (and especially in \S 2), we 
shall consider this action of $\SOoo$ on $S^1$.

The restriction of either the Euclidean metric or the Lorentzian
metric to $S^1$ is the rotationally invariant metric $d\phi^2$ on
$S^1$ for which the total circumference is $2\pi$.

\begin{definition}
An {\em interval\/} on $S^1$  is an open subset $A$ of the form
$\{\vu_\phi\mid \phi_1<\phi<\phi_2\}$ where $\phi_1 < \phi_2$ and
$\phi_2 - \phi_1 < 2\pi$. Its {\em length\/} $\Phi(A)$ is $\phi_2-\phi_1$.
\end{definition}

Note that if $\phi_1<\phi_2$, the points $\va_1=\vu_{\phi_1}$ and $\va_2=\vu_{\phi_2}\in S^1$ bound two
different intervals : we can either take $A=\{\vu_\phi\mid
\phi_1<\phi<\phi_2\}$ or 
$A=\{\vu_\phi\mid \phi_2<\phi<\phi_1+2\pi\}$.  The length of one of
these intervals is less than or equal to $\pi$, in which case
\begin{equation*}
\rho(\va_1,\va_2)  = 2\sin(\Phi(A)/2).
\end{equation*}

Intervals correspond to unit-spacelike vectors as follows.  Let
$A\subset S^1$ be an interval bounded by $\va_1=\vu_{\phi_1}$ and
$\va_2=\vu_{\phi_2}$, where $\phi_1 <\phi_2$.  Then the Lorentzian cross-product $\vu_2\boxtimes\vu_1$
(see, for example, Drumm-Goldman~\cite{DrummGoldman1,DrummGoldman2})
is a positive scalar multiple of the corresponding unit-spacelike vector.

In \cite{Drumm0,Drumm1,Drumm2}, Drumm considers conical neighborhoods
in $\nn$ rather than intervals in $S^1$. 
A {\em conical neighborhood\/}  is a connected open subset $U$
of the future lightcone $\nn$ which is 
invariant under the group $\R_+$ of positive scalar multiplications.
The projectivization $\P(U)$ of a conical neighborhood
is a connected open interval in $\P(\nn)\approx S^1$ which we may 
identify with the interval $U\cap S^1$, which is an interval.
Thus every conical neighborhood  equals $\R_+\cdot A$, where $A\subset S^1$
is the interval $A = U\cap S^1$. 

\subsection{Null frames}\label{sec:nullframes}
Let $\vv\in\Ss$ be a unit-spacelike vector. We associate to $\vv$ 
two null vectors $\xpm{\vv}$ in the future which are
$\B$-orthogonal to $\vv$ and 
lie on the unit circle $S^1=S^2(\sqrt{2})\cap\nn$.
These vectors
correspond to the endpoints of the geodesic $l_\vv$. To define $\xpm{\vv}$,
first observe that the orthogonal complement
\begin{equation*}
\vv^\perp = \{\vu\in\rto\mid  \B(\vu,\vv) = 0\} 
\end{equation*}
meets the positive lightcone $\nn$ in two rays.
Then $S^1 = \nn\cap S^2(\sqrt{2})$
meets $\vv^\perp$ in a pair of vectors $\xpm{\vv}$. We determine which one
of this pair is $\xp{\vv}$ and which one is $\xm{\vv}$ by requiring that the
triple
\begin{equation*}
(\xm{\vv},\xp{\vv},\vv) 
\end{equation*}
be a positively oriented basis of $\rto$.
We call such a basis a {\em null frame\/} of $\rto$.  
(Compare Figure~\ref{fig:frame}.) The pair
$\{\xm{\vv},\xp{\vv}\}$ is a basis for the indefinite plane
$\vv^\perp$.  In fact, $\vv$ is the
unit-spacelike vector corresponding to the interval bounded by the
ordered pair $(\xp{\vv},\xm{\vv})$.

Hyperbolic elements of $\SOoo$ also determine null frames.  
Recall that $g\in\SOoo$ is {\em hyperbolic\/} if
it has real distinct eigenvalues, which are necessarily positive. 
Then the eigenvalues are $\lambda,1,\lambda^{-1}$ and we may assume that
\begin{equation*}
\lambda < 1 < \lambda^{-1}.  
\end{equation*}
Let $\xm{g}$ denote the unique eigenvector with eigenvalue $\lambda$ lying on
$S^1$ and $\xp{g}$ denote the unique eigenvector with eigenvalue 
$\lambda^{-1}$ lying on $S^1$. Then $\xo{g}\in\Ss$ is the uniquely 
determined eigenvector of $g$ such that $(\xm{g},\xp{g},\xo{g})$ is positively
oriented.  Observe that this is a null frame, since
$\xpm{g}=\xpm{\vv}$, where $\vv =\xo{g}$.  

\subsection{$\epsilon$-Hyperbolicity}\label{sec:hyperbolicity}
We may define the {\em hyperbolicity\/} of a unit-spacelike vector $\vv$ 
as the Euclidean distance
$\rho(\xp{\vv},\xm{\vv})$. 
The following definition (Drumm-Goldman~\cite{DrummGoldman1}) is
based on Margulis~\cite{Margulis1,Margulis2}.
\begin{definition}
A unit-spacelike vector $\vv$ is {\em $\epsilon$-spacelike} if
$ \rho(\xp{\vv},\xm{\vv}) \ge \epsilon$.
A hyperbolic element $g\in\SOoo$ is {\em $\epsilon$-hyperbolic} if
$\xo{g}$ is $\epsilon$-spacelike.
An affine isometry is {\em $\epsilon$-hyperbolic} if its linear part
is an $\epsilon$-hyperbolic linear isometry.
\end{definition}
The spacelike vector $\vv$ corresponds to a geodesic 
$l_\vv$ (defined in \eqref{eq:spaceline})
in the hyperbolic plane $\rht$. 
Let 
\begin{equation}\label{eq:rhtorigin}
O = \P\left(\bmatrix 0 \\ 0 \\ 1 \endbmatrix\right) 
\end{equation}
be the {\em origin\/} in $\rht$. 
Although we will not need this, the hyperbolicity relates to
other more familiar quantities. For example, the hyperbolicity of
a vector $\vv$ relates to the distance from $l_\vv$ to the origin
$O$ in $\rht$ and to the Euclidean length of $\vv$ by:
\begin{align*}
\rho(\xp{\vv},\xm{\vv}) & = 2  \sech (d(O,l_\vv)) \\
& = 2 \sqrt{\frac2{1 + \Vert\vv\Vert^2}}. 
\end{align*}
The set of all $\epsilon$-spacelike vectors is the compact set
\begin{equation*}
\Ss_\epsilon =  \Ss \cap B({\origin},\sqrt{8/\epsilon^2 - 1})
\end{equation*}
and $\Ss = \bigcup_{\epsilon>0}\Ss_\epsilon$.

 
\begin{figure}[ht]
\centerline{\epsfxsize=3in \epsfbox{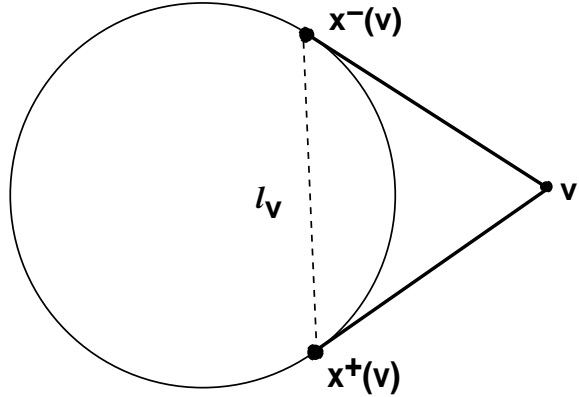}}
\caption
{A null frame}
\label{fig:frame}
\end{figure}
\begin{figure}[ht]
\centerline{\epsfxsize=3in \epsfbox{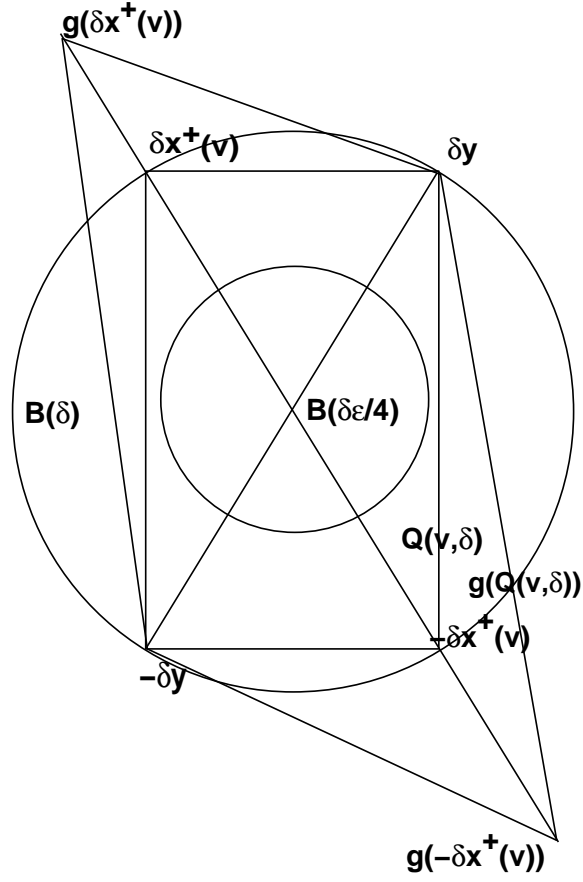}}
\caption{The Compression Lemma}
\label{fig:compression}
\end{figure}

\subsection{Compression}\label{sec:compression}
We now prove the basic technical lemma bounding the compression of
Euclidean balls by isometries of $\E$.
A similar bound was found in Lemma 3 (pp.\ 686--687) of
Drumm~\cite{Drumm2} (see also (1) of \S3.7 of Drumm~\cite{Drumm1}).

\begin{definition}\label{def:weakunstable}
For any unit-spacelike vector $\vv$, the {\em weak-unstable
plane\/} $E^{wu}(\vv)\subset\rto$ is the plane spanned by the vectors
$\vv$ and $\xp{\vv}$. If $g\in\SOto$ is hyperbolic, then $E^{wu}(g)$
is defined as $E^{wu}(\xo{g})$. 
If $x\in\E$, then
$E^{wu}_x(g)$ is defined as the image of $E^{wu}(g)$ under translation 
by $x$.
\end{definition}
\noindent (Drumm~\cite{Drumm0,Drumm1,Drumm2} denotes $E^{wu}$ by $S_+$.)

\begin{lem}\label{lem:inscribedball}
Let $\vv$ be an $\epsilon$-spacelike vector.
Let $\delta>0$ and $Q(\vv,\delta)$ be the convex hull of the four intersection
points 
of $\partial B({\origin},\delta)$ with the two lines
$\R\vv$ and $\R\xp{\vv}$. Then $Q(\vv,\delta)$ is a rectangle
in the plane $E^{wu}(\vv)$ containing
\begin{equation*}
B\left(\origin,\frac{\delta\epsilon}4\right)
\cap E^{wu}(\vv).
\end{equation*}
\end{lem}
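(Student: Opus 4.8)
The plan is to reduce the assertion to a lower bound on the inradius of $Q(\vv,\delta)$ and then to relate that inradius to the hyperbolicity $\epsilon=\rho(\xp{\vv},\xm{\vv})$ by working in the null frame of $\vv$. First I would record that the two lines $\R\vv$ and $\R\xp{\vv}$ meet $\partial B(\origin,\delta)$ in the four points $\pm\delta\vv/\Vert\vv\Vert$ and $\pm\delta\xp{\vv}/\Vert\xp{\vv}\Vert$, all lying in $E^{wu}(\vv)=\o{span}(\vv,\xp{\vv})$ at Euclidean distance $\delta$ from $\origin$. Their convex hull has the two segments $[\,\delta\vv/\Vert\vv\Vert,\,-\delta\vv/\Vert\vv\Vert\,]$ and $[\,\delta\xp{\vv}/\Vert\xp{\vv}\Vert,\,-\delta\xp{\vv}/\Vert\xp{\vv}\Vert\,]$ as diagonals; both have length $2\delta$ and bisect each other at $\origin$, and a quadrilateral whose diagonals are equal and bisect each other is a rectangle. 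This proves the first assertion and shows that $Q(\vv,\delta)$ is centered at $\origin$.

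Since $Q(\vv,\delta)$ and the disk $B(\origin,\delta\epsilon/4)\cap E^{wu}(\vv)$ are concentric and coplanar, the containment is equivalent to the statement that $\delta\epsilon/4$ does not exceed the distance from $\origin$ to the nearest edge of the rectangle. Let $\theta$ denote the Euclidean angle between $\vv$ and $\xp{\vv}$. Each edge joins two of the four vertices and hence subtends either the angle $\theta$ or the angle $\pi-\theta$ at $\origin$, so the chord-to-center distances are $\delta\cos(\theta/2)$ and $\delta\sin(\theta/2)$; thus the inradius equals $\delta\min\{\sin(\theta/2),\cos(\theta/2)\}$. It therefore suffices to prove the scale-free inequality $\min\{\sin(\theta/2),\cos(\theta/2)\}\ge\epsilon/4$.

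For the last step I would normalize by a rotation in $\SOt$ so that $\xpm{\vv}=\vu_{\pm d}$ for some $d\in(0,\pi)$, whence $\vv$ is a positive multiple of $(1,0,\cos d)^{\mathsf T}$. Then $\Vert\xp{\vv}\Vert=\sqrt2$, a direct computation gives the Euclidean product $\vv\cdot\xp{\vv}=2\cos d$, and therefore
\[
\cos\theta=\frac{\sqrt2\,\cos d}{\sqrt{1+\cos^2 d}},\qquad
\rho(\xp{\vv},\xm{\vv})=2\sin d,
\]
while $\epsilon$-spacelikeness of $\vv$ gives $\sin d\ge\epsilon/2$. Substituting into the identity $\min\{\sin(\theta/2),\cos(\theta/2)\}^2=\tfrac12\bigl(1-|\cos\theta|\bigr)$ turns the required bound into an elementary one-variable inequality in $c=\cos d$. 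The hard part is exactly this final estimate: it quantifies how thin (how small $\theta$) the rectangle can be forced to become relative to $\epsilon$, and the extremal configuration is the weakly hyperbolic limit $d\to0$, where $\theta$ and $\epsilon$ vanish together and one must track their ratio precisely.
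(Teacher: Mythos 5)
Your reduction is correct as far as it goes, and it is in fact more careful than the paper's own argument: the vertices are correctly normalized, equal bisecting diagonals do give a rectangle, the largest concentric disk in $Q(\vv,\delta)$ has radius $\delta\min\{\sin(\theta/2),\cos(\theta/2)\}$, and the formulas $\cos\theta=\sqrt2\cos d/\sqrt{1+\cos^2d}$ and $\rho(\xp{\vv},\xm{\vv})=2\sin d$ are right. The genuine gap is the one step you postpone: the ``elementary one-variable inequality'' is \emph{false}. Put $t=\cos^2 d$ and take the extremal case $\epsilon=2\sin d$. The required inequality
\begin{equation*}
\frac12\left(1-\frac{\sqrt2\cos d}{\sqrt{1+\cos^2 d}}\right)\ge\frac{1-\cos^2d}{4}
\end{equation*}
is equivalent to $(1+t)^3\ge 8t$, and $(1+t)^3-8t=(t-1)(t^2+4t-1)$ is negative for all $\sqrt5-2<t<1$. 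So the bound fails throughout the weakly hyperbolic regime that you yourself flag as the hard case: since $\sin\theta=\sin d/\sqrt{1+\cos^2d}$, as $d\to0$ the inradius is $\delta\sin(\theta/2)\sim\delta d/(2\sqrt2)$ while $\delta\epsilon/4\sim\delta d/2$, and the ratio tends to $1/\sqrt2<1$. Completed honestly, your computation refutes the stated constant rather than proving it.

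What you have uncovered is a normalization mismatch already present in the lemma and in the paper's proof: the hyperbolicity $\epsilon$ is a chord length on $S^1\subset S^2(\sqrt2)$, where $\Vert\xp{\vv}\Vert=\sqrt2$, while $Q(\vv,\delta)$ is inscribed in the sphere of radius $\delta$. The paper lists $\pm\delta\xp{\vv}$ among the intersections of $\R\xp{\vv}$ with $\partial B(\origin,\delta)$ (those points have norm $\delta\sqrt2$) and bounds the chords $\rho(\xp{\vv},\pm\vy)\ge\epsilon/2$ by an isosceles-triangle argument; but the sides of the true rectangle are $\delta\rho(\xp{\vv}/\sqrt2,\pm\vy)$, which your formulas show can be as small as roughly $\delta\epsilon/(2\sqrt2)$. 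With consistent normalization, either argument (yours, or the paper's applied to the unit vectors $\xpm{\vv}/\sqrt2$) yields the corrected, sharp inclusion
\begin{equation*}
B\left(\origin,\frac{\delta\epsilon}{4\sqrt2}\right)\cap E^{wu}(\vv)\subset Q(\vv,\delta),
\end{equation*}
which in your variables reduces to $(t-1)^2(t+9)\ge0$. I recommend you prove this statement instead: the weaker constant costs nothing, since Lemmas~\ref{lem:CompressionLemma} and~\ref{lem:affinecompression} then hold with $\delta\epsilon/(4\sqrt2)$, the bound of Lemma~\ref{lem:separation} becomes $\delta\epsilon/8$, and the Main Theorem only ever needs some definite positive multiple of $\delta\epsilon$. (Alternatively, redefine $\epsilon$-spacelikeness using the unit-normalized directions $\xpm{\vv}/\sqrt2$; with that convention the constant $\delta\epsilon/4$ as stated becomes correct.)
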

\begin{proof}
The lines $\R\vv$ and $\R\xp{\vv}$ meet 
$\partial B({\origin},\delta)$ in the four points
$\pm \delta \xp{\vv}, \pm \delta \vy$
where $\vy= \vv/\Vert \vv\Vert$.
(Compare Figure~\ref{fig:compression}.)
Since $\triangle(\xp{\vv},\vy,\xm{\vv})$ is isosceles,
\begin{equation*}
\rho(\xp{\vv},\vy) = \rho(\xm{\vv},\vy),
\end{equation*}
the triangle inequality implies
\begin{align*}
\epsilon & \le \rho(\xp{\vv},\xm{\vv})\\   & \le
\rho(\xp{\vv},\vy) + \rho(\vy,\xm{\vv}) \\ & \le 2 \rho(\xp{\vv},\vy).
\end{align*}
Therefore $\rho(\xp{\vv},\vy) \ge \epsilon/2$.
Similarly $\rho(\xp{\vv},-\vy) \ge \epsilon/2.$
Thus the sides
of $Q(\vv,\delta)$ have length at least $\delta\epsilon/2$,
and $B({\origin},\delta\epsilon/4)\subset Q(\vv,\delta)$ as claimed.
\end{proof}

\begin{lem}\label{lem:CompressionLemma}
Suppose that $g\in\SOoo$ is $\epsilon$-hyperbolic. 
Then for all $\delta>0$,
\begin{equation*}
B\left(\origin,\frac{\delta\epsilon}4\right) \cap E^{wu}(g) \subset
gB({\origin},\delta). 
\end{equation*}
\end{lem}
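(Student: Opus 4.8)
The plan is to reduce the statement to planar geometry inside the $g$-invariant plane $E^{wu}(g)$ and then feed in Lemma~\ref{lem:inscribedball}. Write $\vv=\xo{g}$, so $E^{wu}(g)$ is spanned by the eigenvectors $\vv$ (eigenvalue $1$) and $\xp{g}$ (eigenvalue $\lambda^{-1}$); in particular $g$ restricts to a linear automorphism of $E^{wu}(g)$. For a point $y\in E^{wu}(g)$ one has $y\in gB(\origin,\delta)$ iff $g^{-1}y\in B(\origin,\delta)$, and $g^{-1}y\in E^{wu}(g)$, so $gB(\origin,\delta)\cap E^{wu}(g)=g\bigl(B(\origin,\delta)\cap E^{wu}(g)\bigr)$. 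Since $B(\origin,\delta\epsilon/4)\cap E^{wu}(g)$ already lies in $E^{wu}(g)$, the assertion becomes the purely two-dimensional claim $g^{-1}\bigl(B(\origin,\delta\epsilon/4)\cap E^{wu}(g)\bigr)\subset B(\origin,\delta)\cap E^{wu}(g)$.

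Next I would bring in the inscribed rectangle. Since $g$ is $\epsilon$-hyperbolic, $\vv$ is $\epsilon$-spacelike, so Lemma~\ref{lem:inscribedball} gives $B(\origin,\delta\epsilon/4)\cap E^{wu}(g)\subset Q(\vv,\delta)$, where $Q(\vv,\delta)$ is the rectangle inscribed in the circle $\partial B(\origin,\delta)\cap E^{wu}(g)$ with vertices $\pm\delta\vy$ and $\pm\delta\vz$, for $\vy=\vv/\Vert\vv\Vert$ and $\vz=\xp{g}/\Vert\xp{g}\Vert$. Now apply $g^{-1}$: it fixes $\pm\delta\vy$ (eigenvalue $1$) and sends $\pm\delta\vz$ to $\pm\lambda\delta\vz$ (eigenvalue $\lambda<1$), which lie strictly inside $B(\origin,\delta)$ because $\lambda\delta<\delta$. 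Thus $g^{-1}Q(\vv,\delta)$ is the convex hull of two points on the circle $\partial B(\origin,\delta)$ and two points strictly inside it; by convexity it lies in the closed disk and meets the bounding circle only at the two fixed vertices $\pm\delta\vy$, since a chord from a boundary point to an interior point of a disk meets the boundary only at its boundary endpoint.

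Finally I would dispose of the open-versus-closed subtlety, which is the one genuinely delicate point and the main obstacle: the inclusion $g^{-1}Q(\vv,\delta)\subset\overline{B(\origin,\delta)}$ is not by itself enough, precisely because $g^{-1}$ fixes the vertices $\pm\delta\vy$, which sit on $\partial B(\origin,\delta)$. The remedy is that the smaller disk avoids these two points: since $\epsilon\le 2<4$ we have $\delta\epsilon/4<\delta$, so $\pm\delta\vy\notin B(\origin,\delta\epsilon/4)$, and as $g^{-1}$ fixes $\pm\delta\vy$ the image $g^{-1}\bigl(B(\origin,\delta\epsilon/4)\cap E^{wu}(g)\bigr)$ also omits them. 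Because this image lies in $g^{-1}Q(\vv,\delta)$, whose only points on $\partial B(\origin,\delta)$ are $\pm\delta\vy$, it contains no point of $\partial B(\origin,\delta)$ whatsoever, and hence lies in the open disk $B(\origin,\delta)\cap E^{wu}(g)$. Combined with the reduction of the first paragraph this yields $B(\origin,\delta\epsilon/4)\cap E^{wu}(g)\subset gB(\origin,\delta)$, as required. Note that the expansion estimate itself is entirely supplied by Lemma~\ref{lem:inscribedball}; all that remains is this boundary bookkeeping at the two fixed directions $\pm\delta\vy$.
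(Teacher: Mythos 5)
Your proof is correct and follows essentially the paper's own route: Lemma~\ref{lem:inscribedball} supplies the rectangle $Q(\vv,\delta)$, and the key point is that $g$ fixes $\pm\delta\vy$ while stretching the $\xp{g}$-direction by $\lambda^{-1}>1$, so $Q$ sits inside the image of the $\delta$-ball. The only difference is that you explicitly patch the open-versus-closed issue at the two fixed vertices $\pm\delta\vy$ (where the paper's chain $Q\subset gB(\origin,\delta)$ is, taken literally, false but harmless, since those points lie outside $B(\origin,\delta\epsilon/4)$); this is careful bookkeeping on the same argument, not a different method.
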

\begin{proof}
$g$ fixes $\pm\delta\vy$ and multiplies $\pm\delta\xp{g}$ 
by $\lambda(g)^{-1}>1$ so $Q \subset g(Q)$.
(Compare Figure~\ref{fig:compression}.)
By Lemma~\ref{lem:inscribedball},
$B(\origin,\delta\epsilon/4) \cap E^{wu}(g) \subset Q 
\subset gB({\origin},\delta).$

\end{proof}
The following lemma directly follows from Lemma~\ref{lem:CompressionLemma}
by applying translations.
\begin{lem}[The Compression Lemma]\label{lem:affinecompression}
Suppose that $h\in\Isomo(\E)$ is an $\epsilon$-hyperbolic affine 
isometry.
Then for all $\delta>0$ and $x\in\E$,
\begin{equation*}
B\left(h(x),\frac{\delta\epsilon}4\right) \cap E^{wu}_{h(x)}(g) \subset
h(B(x,\delta)). 
\end{equation*}
\end{lem}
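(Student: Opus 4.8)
The plan is to deduce the Compression Lemma from Lemma~\ref{lem:CompressionLemma} by a simple change of coordinates, conjugating the linear statement by a translation so that the fixed point of the linear dynamics is moved to $h(x)$. Write $h = t_{\vw}\circ g$ in the semidirect product $\iso = \SOoo\ltimes V$, where $g = \L(h)$ is the linear part and $t_{\vw}$ denotes translation by $\vw\in V$. Since $h$ is $\epsilon$-hyperbolic, its linear part $g$ is an $\epsilon$-hyperbolic element of $\SOoo$, so Lemma~\ref{lem:CompressionLemma} applies to $g$ verbatim.

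The key observation is that both sides of the desired inclusion transform compatibly under the translation group. Fix $x\in\E$ and $\delta>0$. First I would translate the problem to the origin: because $B(x,\delta)$ is the Euclidean $\delta$-ball, and Euclidean distance $\rho$ is translation-invariant, we have $B(x,\delta) = x + B(\origin,\delta)$, where the sum denotes the displacement action of $V$ on $\E$. Applying $h$ and using $h = t_{\vw}\circ g$ together with the linearity of $g$ gives
\begin{equation*}
h\bigl(B(x,\delta)\bigr) = t_{\vw}\bigl(g(x) + gB(\origin,\delta)\bigr) = h(x) + gB(\origin,\delta),
\end{equation*}
since $h(x) = t_{\vw}(g(x)) = g(x)+\vw$. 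Thus after subtracting the basepoint $h(x)$, the image ball $h(B(x,\delta))$ is exactly the translate by $h(x)$ of the linear image $gB(\origin,\delta)$.

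Next I would handle the left-hand side. By translation-invariance of $\rho$ again, $B\bigl(h(x),\delta\epsilon/4\bigr) = h(x) + B(\origin,\delta\epsilon/4)$. The weak-unstable plane $E^{wu}_{h(x)}(g)$ is by Definition~\ref{def:weakunstable} the image of the linear plane $E^{wu}(g)$ under translation by $h(x)$, that is $E^{wu}_{h(x)}(g) = h(x) + E^{wu}(g)$. Because translation by the fixed vector $h(x)$ is a bijection of $\E$ that carries $\origin+S$ to $h(x)+S$ for any set $S\subset\rto$, it commutes with intersection. Hence
\begin{equation*}
B\Bigl(h(x),\tfrac{\delta\epsilon}4\Bigr)\cap E^{wu}_{h(x)}(g) = h(x) + \Bigl(B\bigl(\origin,\tfrac{\delta\epsilon}4\bigr)\cap E^{wu}(g)\Bigr).
\end{equation*}

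Finally I would combine these two reductions: the claimed inclusion $B(h(x),\delta\epsilon/4)\cap E^{wu}_{h(x)}(g)\subset h(B(x,\delta))$ becomes, after cancelling the common translation by $h(x)$ (which is injective, hence preserves and reflects inclusions),
\begin{equation*}
B\bigl(\origin,\tfrac{\delta\epsilon}4\bigr)\cap E^{wu}(g) \subset gB(\origin,\delta),
\end{equation*}
which is precisely the conclusion of Lemma~\ref{lem:CompressionLemma} applied to the $\epsilon$-hyperbolic linear isometry $g$. This completes the argument. I do not anticipate a genuine obstacle here; the lemma is stated as a direct consequence, and the only point requiring care is the bookkeeping that the translation by $h(x)$ interacts correctly with all three operations involved — the ball, the weak-unstable plane, and the intersection — which is exactly why the affine-invariant formulation of $E^{wu}_x$ was built into Definition~\ref{def:weakunstable}. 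The mild subtlety worth stating explicitly is that one must translate by $h(x)$ rather than by $\vw$ alone, since it is the \emph{image} point $h(x)$, not the original basepoint, about which the linear dynamics of $g$ is naturally centered.
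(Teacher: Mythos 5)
Your proposal is correct and follows essentially the same route as the paper, whose entire proof is the remark that the lemma ``directly follows from Lemma~\ref{lem:CompressionLemma} by applying translations.'' Your conjugation-by-translation bookkeeping (reducing both sides to the linear statement at the origin via the basepoint $h(x)$) is precisely that argument written out in full.
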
  

\section{Schottky groups}\label{sec:Schottky}
In this section we recall the construction of Schottky subgroups in
$\SOoo$ and their action on $\rht$. We also use the projective action
$\P(g)$ of $g\in\SOoo$ on $S^1$ (see \S\ref{sec:Euclid}) and
abbreviate $\P(g)$ by $g$ to ease notation.

This classical construction is the template for the construction of
affine Schottky groups later in \S\ref{sec:affineSchottky}.  Then we
prove several elementary technical facts to be used later in the proof 
of the Main Theorem.

\subsection{Schottky's configuration}
Let $G\subset\SOoo$ be a group generated be 
$g_1,\dots,g_m$, for which there exist intervals
$A_i^-,A_i^+\subset S^1$, $i=1,\dots,m$ such that: 
\begin{itemize}
\item $g_i(A_i^-)  = \nn - \Bar{A}_i^+$;  
\item $g_i^{-1}(A_i^+)  = \nn - \Bar{A}_i^-$.  
\end{itemize}
We call $G$ a {\em Schottky group}.  Write $J$ for the set $\{+1,-1\}$ or its abbreviated version
$\{+,-\}$. Denote by $I$ 
the set $\{1,\dots,m\}$. We index many of the objects associated with Schottky
groups (for example the intervals $A_i^j$ and the Schottky
generators $g_i^j$) by the Cartesian product 
\begin{equation*}
I\times J =  \{1,\dots,m\} \times \{+,-\}.
\end{equation*}
Let $H_i^+$ and $H_i^-$ be the two half-spaces
(the convex hulls) in $\rht$ bounded by
$A_i^+$ and $A_i^-$ respectively. 
Their complement 
\begin{equation*}
\Delta_i = \rht-(\Bar{H}_i^+\cup \Bar{H}_i^-) 
\end{equation*}
is the convex hull in $\rht$ of 
$\partial\rht-(\Bar{A}_i^+\cup \Bar{A}_i^-). $
These half-spaces satisfy conditions analogous to those above : 
\begin{itemize}
\item $g_i(H_i^-)  = \rht - \Bar{H}_i^+$;  
\item $g_i^{-1}(H_i^+)  = \rht - \Bar{H}_i^-$.
\end{itemize}
(Compare Fig.~\ref{fig:delta1}.)  $\Delta_i$ is a fundamental domain
for the cyclic group $\la g_i\ra$. As all of the $A_i^j$ are disjoint, 
all of the complements $\rht -\Delta_i$ are disjoint.

\begin{lem}\label{lem:Brouwer}
For each $i=1,\dots,m$, $\xp{g_i}\in A_i^+$ and 
$\xm{g_i}\in A_i^-$.
\end{lem}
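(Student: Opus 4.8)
The plan is to locate the two fixed points of the projective action $\P(g_i)$ on $S^1$ (abbreviated $g_i$, as in the section) by the standard Schottky contraction argument: show that $g_i$ carries the closed arc $\Bar{A}_i^+$ strictly into its own interior, apply \emph{Brouwer's fixed point theorem} to produce a fixed point there, and then use the eigenvalue data of $g_i$ to identify that fixed point as the attracting one $\xp{g_i}$. The statement for $\xm{g_i}$ will then follow by symmetry.

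First I would record that the closed arcs $\Bar{A}_i^+$ and $\Bar{A}_i^-$ are disjoint, so that $\Bar{A}_i^+\subset S^1-\Bar{A}_i^-$; this is part of the Schottky configuration (the intervals are separated arcs, whence the complements $\rht-\Delta_i$ are disjoint). Next, the second defining relation $g_i^{-1}(A_i^+)=S^1-\Bar{A}_i^-$, applied by $g_i$ (a homeomorphism of $S^1$), gives
\[
g_i\bigl(S^1-\Bar{A}_i^-\bigr)=A_i^+ .
\]
Combining this with $\Bar{A}_i^+\subset S^1-\Bar{A}_i^-$ yields the strict containment
\[
g_i(\Bar{A}_i^+)\subset A_i^+ ,
\]
so $g_i$ maps the closed arc $\Bar{A}_i^+$ into its interior $A_i^+$.

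Now apply Brouwer: the continuous self-map $g_i\colon \Bar{A}_i^+\to\Bar{A}_i^+$ has a fixed point, which must be one of the two fixed points $\xp{g_i},\xm{g_i}$ of $g_i$ on $S^1$. Moreover, iterating the strict containment shows that the closed arcs $g_i^{\,n}(\Bar{A}_i^+)$ form a nested sequence, each contained in the interior of its predecessor, so their intersection is a single attracting fixed point lying in $A_i^+$. Since $g_i$ scales $\xp{g_i}$ on the lightcone by $\lambda^{-1}>1$ and $\xm{g_i}$ by $\lambda<1$, the attracting fixed point on $S^1$ is $\xp{g_i}$; hence $\xp{g_i}\in A_i^+$. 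For the second claim I would invoke symmetry: the triple $(g_i^{-1},A_i^-,A_i^+)$ satisfies exactly the Schottky conditions of $(g_i,A_i^+,A_i^-)$ with the roles of $+$ and $-$ interchanged, so the result just proved gives $\xp{g_i^{-1}}\in A_i^-$. As the attracting fixed point of $g_i^{-1}$ is the repelling fixed point of $g_i$, we have $\xp{g_i^{-1}}=\xm{g_i}$, and therefore $\xm{g_i}\in A_i^-$.

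The main obstacle is the dynamical bookkeeping rather than any computation: one must correctly match the topological notions of attracting and repelling fixed point on $S^1$ with the eigenvalue labels $\xpm{g_i}$, and make the nested-arc argument rigorous. In particular one should rule out that the intersection $\bigcap_n g_i^{\,n}(\Bar{A}_i^+)$ is a nondegenerate arc; this would force both fixed points of $g_i$ to lie in $A_i^+$, which is impossible because the symmetric argument places $\xm{g_i}$ in $A_i^-$, disjoint from $\Bar{A}_i^+$. A clean way to avoid this entirely is to note that $\P(g_i)$ is a hyperbolic projective transformation with exactly two fixed points, so it fixes no nondegenerate sub-arc, forcing the nested intersection to be the single point $\xp{g_i}$.
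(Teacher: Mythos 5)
Your proof is correct and takes essentially the same approach as the paper's: the Schottky relation yields $g_i(\Bar{A}_i^+)\subset A_i^+$, Brouwer's fixed-point theorem produces a fixed point there, the attracting/repelling dichotomy identifies it as $\xp{g_i}$, and the symmetric argument applied to $g_i^{-1}$ places $\xm{g_i}$ in $A_i^-$ (the paper packages the same ingredients slightly differently, applying Brouwer to both arcs first and then ruling out the ``swapped'' assignment of the two fixed points by the attracting/repelling dichotomy). One caution: your closing shortcut --- that a hyperbolic projective transformation ``fixes no nondegenerate sub-arc'' --- is imprecise, since the closed arcs joining its two fixed points \emph{are} invariant; but this is harmless, because your primary argument (a nondegenerate invariant intersection would force both fixed points into $A_i^+$, contradicting the fixed point that Brouwer places in the disjoint arc $A_i^-$) is the correct way to rule that case out.
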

\begin{proof}

Since $g_i$ is hyperbolic, it has three invariant lines corresponding
to its eigenvalues. The two eigenvectors corresponding to $\lambda$ and
$\lambda^{-1}$ are null, determining exactly two fixed points of $g_i$
on $S^1$. The fixed point corresponding to $\xp{g_i}$ is attracting 
and the fixed point corresponding to $\xm{g_i}$ is repelling.
Since $A_i^+\subset S^1 - A_i^-$,
\begin{equation*}
g_i(A_i^+) \subset g_i(S^1 - A_i^-) = A_i^+ 
\end{equation*}
so Brouwer's fixed-point theorem implies that
either $\xp{g_i}$ or $\xm{g_i}$ lies in $A_i^+$.
The same argument applied to $g_i^{-1}$ implies that 
either $\xp{g_i}$ or $\xm{g_i}$ lies in $A_i^-$.
Since 
$g$ has 
only two fixed points and $A_i^-\cap A_i^+=\emptyset$,
either $\xp{g_i}\in A_i^+, \xm{g_i}\in A_i^-$ or
$\xp{g_i}\in A_i^-, \xm{g_i}\in A_i^+$. The latter case cannot happen
since $\xp{g_i}$ is attracting and
$\xm{g_i}$ is repelling.
\end{proof}

\begin{figure}[ht]
\centerline{\epsfxsize=3in \epsfbox{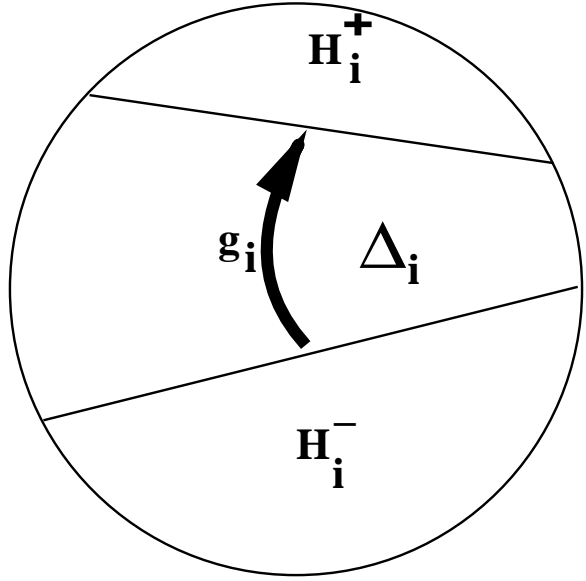}}
\caption
{Half-planes defining a fundamental domain for a cyclic hyperbolic group}
\label{fig:delta1}
\end{figure}

\begin{figure}[ht]
\centerline{\epsfxsize=3in \epsfbox{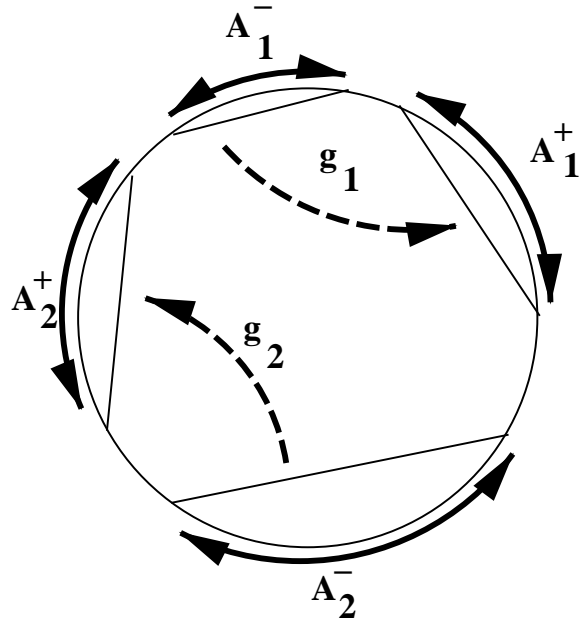}}
\caption
{Generators for a Schottky group}
\label{fig:delta2}
\end{figure}

The following theorem is the basic result on Schottky groups.  It is
one of the simplest cases of ``Poincar\'e's
theorem on fundamental polygons'' or ``Klein's combination theorem.''
Compare Beardon~\cite{Beardon}, Ford~\cite{Ford},
Ratcliffe~\cite{Ratcliffe}, pp. 584--587, and
Epstein-Petronio~\cite{EpsteinPetronio}.

\begin{thm}\label{thm:Schottky}
The set $\{g_1,\dots,g_m\}$ freely generates $G$ and $G$ is discrete.
The intersection $\Delta = \Delta_1\cap \dots \cap \Delta_m$
is a fundamental domain for $G$ acting on $\rht$. 
\end{thm}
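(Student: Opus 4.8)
The plan is to follow the classical "ping-pong" argument, using the disjointness of the sets $\Delta_i$ (equivalently, of the intervals $A_i^j$) to control how reduced words in the generators move points of $\rht$. First I would establish the ping-pong dynamics for the individual generators. From the defining relations $g_i(H_i^-) = \rht - \Bar{H}_i^+$ and $g_i^{-1}(H_i^+) = \rht - \Bar{H}_i^-$, I would deduce that $g_i$ maps the complement of $\Bar{H}_i^-$ into $H_i^+$; more precisely, since $\rht - \Bar{H}_i^- \subset H_i^-$ would be false, the correct reading is that $g_i$ carries everything outside $\Bar{H}_i^-$ into $\Bar{H}_i^+$, and symmetrically $g_i^{-1}$ carries everything outside $\Bar{H}_i^+$ into $\Bar{H}_i^-$. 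The key consequence I want is: for any point $x\in\Delta$ (which lies outside all the $\Bar{H}_i^j$), applying $g_i^{\pm 1}$ lands us inside $\Bar{H}_i^{\pm}$, a specific half-space that is disjoint from all the others.

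Next I would prove that $\{g_1,\dots,g_m\}$ freely generates $G$, by showing that any nonempty reduced word $w = g_{i_1}^{\varepsilon_1}\cdots g_{i_n}^{\varepsilon_n}$ acts nontrivially. Fix a basepoint $x_0\in\Delta$. Reading the word from the right, I would track the image of $x_0$: the last letter $g_{i_n}^{\varepsilon_n}$ sends $x_0$ into $\Bar{H}_{i_n}^{\varepsilon_n}$, and then I would argue inductively that each subsequent letter, because the word is reduced and the half-spaces are pairwise disjoint, keeps the running image inside the appropriate half-space $\Bar{H}_{i_k}^{\varepsilon_k}$. Concretely, if $g_{i_{k-1}}^{\varepsilon_{k-1}} \neq (g_{i_k}^{\varepsilon_k})^{-1}$, then the current point lies in a half-space $\Bar{H}_{i_k}^{\varepsilon_k}$ disjoint from $\Bar{H}_{i_{k-1}}^{-\varepsilon_{k-1}}$, so applying $g_{i_{k-1}}^{\varepsilon_{k-1}}$ sends it into $\Bar{H}_{i_{k-1}}^{\varepsilon_{k-1}}$. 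At the end, $w(x_0)$ lies in $\Bar{H}_{i_1}^{\varepsilon_1}$, which is disjoint from $\Delta\ni x_0$, so $w(x_0)\neq x_0$ and hence $w\neq 1$. This simultaneously gives freeness and discreteness: any sequence of distinct reduced words moves $x_0$ into the disjoint half-spaces, so the orbit cannot accumulate at $x_0$, whence the stabilizer is trivial and the action is discrete.

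For the fundamental-domain claim, I would verify the two standard conditions: that the $G$-translates of $\Bar\Delta$ cover $\rht$, and that distinct open translates $g\Delta$ are disjoint. Disjointness follows from the same ping-pong bookkeeping: if $g\Delta\cap\Delta\neq\emptyset$ for a nontrivial reduced $g$, then a point of $\Delta$ would be carried by $g$ back into $\Delta$, contradicting that $g$ pushes $\Delta$ into the half-space $\Bar{H}_{i_1}^{\varepsilon_1}$ disjoint from $\Delta$. The covering statement is the more delicate half and is where I expect the main obstacle: I would take an arbitrary $y\in\rht$ and, if $y\notin\Delta$, apply the generator whose half-space contains $y$ to move it strictly "inward," then argue that this reduction process terminates with a representative in $\Bar\Delta$. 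Making the termination rigorous requires a completeness input — in the hyperbolic setting one controls the geodesic distances and shows the nested half-spaces shrink — which is exactly the completeness argument the authors reference for Poincar\'e's polygon theorem. I would therefore invoke the hyperbolic metric structure on $\rht$ (rather than purely combinatorial ping-pong) to guarantee that every point lies in some translate of $\Bar\Delta$, and I would flag that this completeness step is the analog of the hard part later needed for the affine Schottky groups.
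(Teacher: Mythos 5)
Your ping-pong argument for freeness, discreteness, and disjointness of the translates $g\Delta$ is correct and coincides with the paper's own treatment (Lemmas~\ref{lem:ldisjoint} and~\ref{lem:ldiscrete}): reading a reduced word from the right, each partial image of a basepoint in $\Delta$ is trapped in a prescribed half-space, so a nontrivial reduced word moves $\Delta$ entirely off itself, and the orbit of a point of $\Delta$ stays a definite distance from that point. (The muddle in your first paragraph about which complement maps where resolves correctly: the defining relations give $g_i(\rht-\Bar{H}_i^-)=H_i^+$, which is exactly what your induction uses.)

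The genuine gap is the covering statement $G\bD=\rht$, which is the hard content of the theorem and occupies the paper's longest proof (Lemma~\ref{lem:lcomplete}). Your inward-pushing scheme, applied to a hypothetical uncovered point $y$, produces a strictly nested sequence of half-spaces $\gamma_k\Bar{H}_{i_k}^{j_k}$ all containing $y$; the entire issue is to show this cannot happen, and here you only propose to ``invoke'' the completeness step of Poincar\'e's polygon theorem. In context that is circular: this theorem \emph{is} the case of Poincar\'e's theorem being proved, and the paper's purpose is a self-contained argument whose affine analogue is the Main Theorem. The missing ingredient is concrete: since the defining arcs (and their closures) are pairwise disjoint, distinct sides of $\bD$ are ultraparallel geodesics, so there is $\delta>0$ bounding from below the distance between any two distinct sides of $\bD$, hence of any tile $g\bD$; the reducedness condition $(i_{k+1},j_{k+1})\neq(i_k,-j_k)$ makes $L_k=\gamma_k\,\partial H_{i_k}^{j_k}$ and $L_{k+1}$ distinct sides of the single tile $\gamma_{k+1}\bD$, so $d(L_k,L_{k+1})\ge\delta$, and since each $L_k$ separates $L_{k-1}$ from $L_{k+1}$ these distances add, giving $d(y,L_0)\ge d(L_k,L_0)\ge k\delta$ for every $k$, which is absurd. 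With that bound your termination argument closes, and it would then be a legitimate alternative to the paper's route: the paper instead forms the abstract quotient of $\bD\times G$, proves the induced hyperbolic structure is geodesically complete (a geodesic of length $t_0$ meets at most $t_0/\delta$ tiles, by the same $\delta$-separation), and concludes by covering-space theory that the developing map is a diffeomorphism onto $\rht$. Notably, the completed version of your argument mirrors the paper's later proof of completeness for \emph{affine} Schottky groups (the nested crooked half-spaces of \S\ref{sec:sequence}--\S\ref{sec:approx}) more closely than the paper's own hyperbolic proof does.
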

\noindent Figure~\ref{fig:delta2} depicts the pattern of identifications.

We break the proof into three separate lemmas: first, that the
$g\Delta$ form a set of disjoint tiles, second, that $G$ is
discrete and third, that these tiles cover all of $\rht$.  The first
lemma extends immediately to affine Schottky groups.  The second lemma
is automatic since the linear part of $\Gamma$ equals $G$, which we
already know is discrete.  However, a much different argument is
needed to prove that the tiles cover $\E$ in the affine case.

\begin{figure}[htb]
\centerline{\epsfxsize=3in \epsfbox{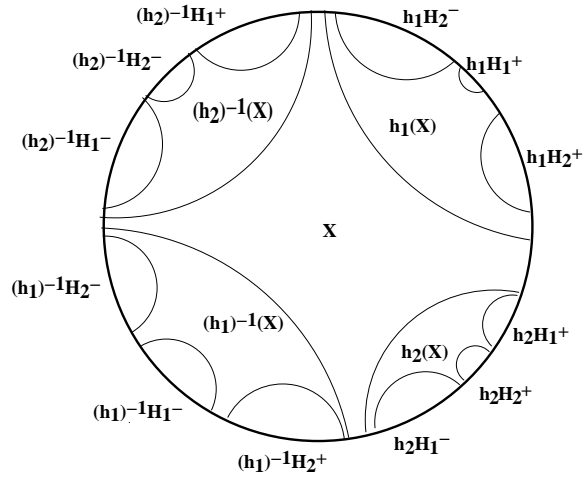}}
\caption{The tiling associated to a Schottky group, in the Poincar\'e disk model}
\label{fig:schottky1}
\end{figure}

\begin{lem}\label{lem:ldisjoint}
If $g\in G$ is nontrivial, then $g\Delta\cap\Delta = \emptyset$.
\end{lem}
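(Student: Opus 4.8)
The plan is to exploit the ``ping-pong'' structure encoded in the Schottky configuration, exactly as one does for Schottky groups in classical Fuchsian theory. The key data are the $2m$ half-spaces $H_i^{\pm}$ whose complements $\rht - \Delta_i$ are pairwise disjoint, together with the pairing relations $g_i(H_i^-) = \rht - \Bar{H}_i^+$ and $g_i^{-1}(H_i^+) = \rht - \Bar{H}_i^-$. First I would record the crucial covering consequence of these relations: since each $\rht - \Delta_i$ is disjoint from the other $\rht - \Delta_{i'}$, the half-space $H_i^{\pm}$ is contained in $\Delta_{i'}$ for every $i' \neq i$, and so whenever $g \in G$ maps some point into a fixed half-space $H_i^{\epsilon}$, that point lies in the disjoint union of the $H_{i'}^{\epsilon'}$.

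The heart of the argument is the standard ping-pong bookkeeping. Write a nontrivial $g \in G$ as a reduced word $g = s_1 s_2 \cdots s_n$ in the generators $g_i^{\pm 1}$, using that $\{g_1,\dots,g_m\}$ freely generates $G$ (Theorem~\ref{thm:Schottky}, or rather the part of it we may invoke). I would prove by induction on $n$ the claim that $g(\Delta)$ is contained in a single half-space determined by the leftmost letter $s_1$: precisely, if $s_1 = g_i$ then $g(\Delta) \subset \rht - \Bar{H}_i^+$ lies inside $\Bar{H}_i^+$'s complement, and symmetrically if $s_1 = g_i^{-1}$ then $g(\Delta) \subset \rht - \Bar{H}_i^-$. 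The base case $n=1$ is immediate from the pairing relations applied to $\Delta \subset \Delta_i = \rht - (\Bar{H}_i^+ \cup \Bar{H}_i^-)$: for instance $g_i(\Delta) \subset g_i(H_i^-) = \rht - \Bar{H}_i^+$, using $\Delta \subset H_i^-$ relative to the $g_i$-pairing. For the inductive step, I would apply $s_1$ to the containment already known for $s_2 \cdots s_n$, with reducedness ($s_1 \neq s_2^{-1}$) ensuring the inner half-space meets the correct side so that the image lands in the asserted half-space; this is where the disjointness of all the $\rht - \Delta_i$ feeds in.

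Once that claim is established, the lemma follows at once: for nontrivial $g$, the image $g(\Delta)$ lies in one of the half-spaces $\rht - \Bar{H}_i^{\pm}$, hence is contained in some $\Bar{H}_i^{\mp}$-complement that is disjoint from $\Delta$, because $\Delta \subset \Delta_i = \rht - (\Bar{H}_i^+ \cup \Bar{H}_i^-)$ avoids both $H_i^+$ and $H_i^-$. Therefore $g(\Delta) \cap \Delta = \emptyset$.

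The step I expect to be the main obstacle is setting up the inductive bookkeeping cleanly: one must track which generator is the \emph{leftmost} letter (not the last applied) and verify that reducedness of the word prevents the image from escaping back toward $\Delta$. The sign conventions in the pairing relations $g_i(H_i^-) = \rht - \Bar{H}_i^+$ are asymmetric between $g_i$ and $g_i^{-1}$, so I would be careful to state the induction hypothesis with the correct $+/-$ labels and to check both cases $s_1 = g_i$ and $s_1 = g_i^{-1}$ separately. Everything else is a direct application of the disjointness hypothesis and the two pairing identities.
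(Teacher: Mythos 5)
Your overall strategy---induction on the length of a reduced word, tracking the half-space determined by the leftmost letter---is exactly the paper's, but your key containments are stated backwards, and this breaks the argument at every stage. The base case rests on the premise ``$\Delta \subset H_i^-$,'' which is false: by construction $\Delta \subset \Delta_i = \rht - (\Bar{H}_i^+ \cup \Bar{H}_i^-)$, so $\Delta$ is \emph{disjoint} from $H_i^-$ (and from $H_i^+$). The correct way to use the pairing $g_i(H_i^-) = \rht - \Bar{H}_i^+$ is to pass to complements: since $g_i$ is a homeomorphism, $g_i(\Bar{H}_i^-) = \rht - H_i^+$, hence $g_i(\rht - \Bar{H}_i^-) = H_i^+$. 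Applied to $\Delta \subset \rht - \Bar{H}_i^-$, this gives $g_i(\Delta) \subset H_i^+$: the image is pushed \emph{into} the half-space $H_i^+$, not into its complement as you assert. The same reversal infects your inductive claim (where $g(\Delta) \subset \rht - \Bar{H}_i^+$ should read $g(\Delta) \subset H_i^+$) and, fatally, your concluding step: knowing $g(\Delta) \subset \rht - \Bar{H}_i^+$ gives no disjointness at all, because $\Delta$ itself lies in $\rht - \Bar{H}_i^+$. Only the containment $g(\Delta) \subset H_{i}^{j}$, with $(i,j)$ indexed by the leftmost letter, yields $g\Delta \cap \Delta = \emptyset$, since $\Delta$ avoids every $H_i^j$.

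With the containments righted, the induction goes through as in the paper: writing $g = g_{i_k}^{j_k} g'$ where the inductive hypothesis gives $g'(\Delta) \subset H_{i_{k-1}}^{j_{k-1}}$, reducedness gives $(i_{k-1}, j_{k-1}) \neq (i_k, -j_k)$, so disjointness of the half-spaces yields $H_{i_{k-1}}^{j_{k-1}} \subset \rht - \Bar{H}_{i_k}^{-j_k}$, and then $g(\Delta) \subset g_{i_k}^{j_k}(\rht - \Bar{H}_{i_k}^{-j_k}) = H_{i_k}^{j_k}$. One further caution: you should not invoke Theorem~\ref{thm:Schottky} to write $g$ as a reduced word, since freeness of the generating set is a \emph{consequence} of this lemma (as the paper notes); all you need is that any nontrivial element of $G$ is represented by at least one nonempty reduced word, which follows by cancelling an arbitrary word representing it.
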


\begin{lem}\label{lem:ldiscrete} G is discrete.\end{lem}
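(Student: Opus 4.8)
The plan is to deduce discreteness from the disjointness of tiles established in Lemma~\ref{lem:ldisjoint}, using only that $G$ acts continuously on the metric space $\rht$ and that the fundamental region $\Delta$ is open. Since $G$ is a subgroup of the Lie group $\SOoo$, it is discrete precisely when the identity $\Id$ is isolated in $G$; by left translation within $G$ it then suffices to rule out a sequence of nontrivial elements converging to $\Id$.

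First I would fix an interior point $x_0\in\Delta$. The set $\Delta=\Delta_1\cap\dots\cap\Delta_m$ is open, being a finite intersection of the open sets $\Delta_i=\rht-(\Bar{H}_i^+\cup\Bar{H}_i^-)$, and it is nonempty by the Schottky configuration of pairwise disjoint intervals; hence such an $x_0$ exists together with some $r>0$ for which the hyperbolic ball $U=\{x\in\rht\mid d(x,x_0)<r\}$ satisfies $U\subset\Delta$. For any nontrivial $g\in G$, Lemma~\ref{lem:ldisjoint} gives $g\Delta\cap\Delta=\emptyset$; since $gx_0\in g\Delta$ while $U\subset\Delta$, this forces $gx_0\notin U$, that is, $d(gx_0,x_0)\ge r$.

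Now suppose, for contradiction, that $G$ is not discrete. Then there is a sequence of nontrivial elements $g_k\in G$ with $g_k\to\Id$ in $\SOoo$. Because the action of $\SOoo$ on $\rht$ is continuous, $g_k x_0\to\Id\cdot x_0=x_0$, so $d(g_k x_0,x_0)<r$ for all large $k$, contradicting the bound of the previous paragraph. Hence no such sequence exists, $\Id$ is isolated in $G$, and $G$ is discrete.

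The argument is short, and I expect no serious obstacle: it is the elementary fact that a nonempty open set moved off itself by every nontrivial group element forces discreteness of a group acting continuously. The only points needing a word of justification are that $\Delta$ is open and nonempty, which is immediate from its description as a finite intersection of open complements of half-spaces in the Schottky configuration, and that the orbit map $g\mapsto gx_0$ is continuous, which holds since $\SOoo$ acts smoothly on $\rht$. This is really the assertion that $G$ acts properly discontinuously in a neighborhood of $x_0$, of which isolation of $\Id$ is the special case we need.
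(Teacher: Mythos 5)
Your proof is correct and follows essentially the same route as the paper: fix $x_0$ in the open set $\Delta$, use Lemma~\ref{lem:ldisjoint} to get a uniform lower bound $d(x_0,gx_0)\ge r$ for all nontrivial $g\in G$, and conclude that no sequence of nontrivial elements can converge to the identity. The paper's proof is just a more compressed version of this exact argument.
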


\begin{lem}\label{lem:lcomplete} The union
\begin{equation*}
G\bD = \bigcup_{g\in G} g(\bD)
\end{equation*}
equals all of $\rht$.
\end{lem}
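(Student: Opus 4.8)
The plan is to imitate the classical ping-pong argument: organize the tiles $g\bD$ into a tree indexed by reduced words, and show that the half-spaces attached to an infinite branch of this tree recede to the ideal boundary. Since an interior point of $\rht$ cannot lie in an infinite family of half-spaces whose bounding geodesics escape to infinity, every such point must already lie in some tile $g\bD$.

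First I would extract a recursive decomposition of each half-space. Start from the partition $\rht=\bD\cup\bigcup_{(i',j')}H_{i'}^{j'}$ and apply $g_i$; since $g_i(H_i^-)=\rht-\Bar H_i^+$ is exactly the one piece landing outside $\Bar H_i^+$, removing it gives
\begin{equation*}
\Bar H_i^+ \;=\; g_i\bD \;\cup \bigcup_{(i',j')\neq(i,-)} g_i\Bar H_{i'}^{j'},
\end{equation*}
together with the mirror statement for $\Bar H_i^-$ under $g_i^{-1}$. Because $\Bar A_{i'}^{j'}$ is disjoint from $\Bar A_i^-$ whenever $(i',j')\neq(i,-)$, and $g_i(\nn-\Bar A_i^-)=A_i^+$, each child $g_i\Bar A_{i'}^{j'}$ is a compact subset of the open interval $A_i^+$; hence the bounding geodesic of each child half-space is disjoint from $\partial H_i^+$. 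Finally, every boundary geodesic $\partial H_i^j$ lies in $\bD$, being disjoint from all the $H_{i'}^{j'}$.

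Now suppose, for contradiction, that some interior point $p\in\rht$ lies in no tile $g\bD$. Then $p$ avoids every $\bD$-copy and every boundary geodesic, so at each stage the decomposition above forces $p$ into one of the open child half-spaces. This produces an infinite reduced word $g_{i_1}^{\epsilon_1}g_{i_2}^{\epsilon_2}\cdots$ and a nested sequence of closed half-spaces $K_1\supset K_2\supset\cdots$, where $K_n=w_{n-1}\Bar H_{i_n}^{j_n}$ with $w_{n-1}=g_{i_1}^{\epsilon_1}\cdots g_{i_{n-1}}^{\epsilon_{n-1}}$, each containing $p$. Writing $I_n$ for the ideal interval bounding $K_n$, the intervals nest with $\Bar I_{n+1}$ in the interior of $I_n$; since $O\in\Delta$ we have $O\notin K_1$, and the bounding geodesics $\partial K_n$ are pairwise disjoint, each separating $O$ from the deeper half-spaces.

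The heart of the matter, and the main obstacle, is to show $K_n$ shrinks to a single ideal point. I would prove $d(O,\partial K_n)\to\infty$ by bounding the successive gaps below. Applying the isometry $w_{n-1}^{-1}$,
\begin{equation*}
d(\partial K_n,\partial K_{n+1}) \;=\; d\bigl(\partial H_{i_n}^{j_n},\,g_{i_n}^{\epsilon_n}\partial H_{i_{n+1}}^{j_{n+1}}\bigr),
\end{equation*}
a distance between a standard geodesic and the image under a single generator of another standard geodesic. There are only finitely many such configurations, and in each the two geodesics are disjoint (the child half-space has its bounding geodesic disjoint from that of its parent), so these distances admit a uniform lower bound $\delta>0$. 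Since $\partial K_n$ separates $O$ from $\partial K_{n+1}$, the geodesic segment from $O$ to its nearest point of $\partial K_{n+1}$ crosses $\partial K_n$, giving $d(O,\partial K_{n+1})\ge d(O,\partial K_n)+\delta$, whence $d(O,\partial K_n)\to\infty$. A half-space not containing $O$ whose bounding geodesic lies at distance $R$ from $O$ subtends an ideal interval of length tending to $0$ as $R\to\infty$; thus $\Phi(I_n)\to0$ and $\bigcap_n K_n$ is a single point $\xi\in\partial\rht$. This contradicts $p\in\bigcap_n K_n$ with $p$ interior, so $G\bD=\rht$. I note that this uniform separation of the bounding geodesics is the exact hyperbolic prototype of the uniform-width estimates needed later in the affine setting.
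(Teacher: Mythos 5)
Your proof is correct, but it takes a genuinely different route from the paper's. The paper proves the lemma by a completeness/developing-map argument: it builds an abstract model $\tilde M$ of the universal cover by gluing $\bD\times G$ along the face identifications, pulls back the hyperbolic metric, and shows $\tilde M$ is geodesically complete --- a finite-length geodesic can cross only finitely many tiles because $\bD$ is convex and distinct sides of $\bD$ are ultraparallel geodesics a uniform distance apart --- so the developing map $D:\tilde M\to\rht$, a local isometry from a complete manifold, is a covering map, hence a diffeomorphism onto $\rht$ since $\rht$ is simply connected, giving $G\bD=\rht$. You instead argue entirely inside $\rht$: a point $p$ in no tile yields a nested sequence of closed half-planes $K_n\ni p$ whose bounding geodesics are pairwise ultraparallel; the finitely many parent--child configurations give a uniform $\delta>0$ with $d(\partial K_n,\partial K_{n+1})\ge\delta$, hence $d(O,\partial K_n)\ge (n-1)\delta\to\infty$, so the $K_n$ shrink to a single ideal point, contradicting $p\in\bigcap_n K_n$. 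Both proofs rest on the same geometric input (disjointness of the closed arcs $A_i^j$ forces ultraparallel bounding geodesics, hence uniform positive separation --- do state explicitly that you need the \emph{closures} of the arcs to be pairwise disjoint, exactly as the paper does when it asserts the sides of $\bD$ are a distance $\ge\delta$ apart), but they buy different things. The paper's route is the classical completeness argument for geometric structures and explains the word ``completeness,'' yet it depends on Riemannian machinery (Hopf--Rinow, covering-space theory) with no Lorentzian analogue, which is precisely why the paper must abandon it for the Main Theorem. Your route avoids covering-space theory altogether and is, as you observe, the hyperbolic prototype of the proof the paper actually gives in the affine case: your nested $K_n$ correspond to the nested crooked half-spaces of Lemma~\ref{lem:nestedsequence}, your uniform $\delta$ to Lemma~\ref{lem:delta0}, and your gap estimate to Lemma~\ref{lem:separation}; the one feature with no hyperbolic counterpart is the $\epsilon$-hyperbolicity analysis, which in $\rht$ is subsumed by the fact that isometries preserve distance.
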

\begin{proof}[Proof of Lemma~\ref{lem:ldisjoint}]
We show that if $g\in G$ is a reduced word
\begin{equation*}
g = g_{i_k}^{j_k} \dots g_{i_1}^{j_1} 
\end{equation*}
(where $j_i = \pm 1$) then either $k=0$ (that is, $g = 1$) or 
$g \Delta\cap \Delta = \emptyset$. 
In the latter case $g\bD\cap\bD=\emptyset$ 
unless $k=1$. This implies that the $g_i$ freely generate $G$
and that $G$ acts properly and freely on the union
\begin{equation*}
\Gamma\bD = \bigcup_{\gamma\in\Gamma} \gamma\bD.
\end{equation*}
Then we show that $\Gamma\bD = \rht$.

Suppose that $k>0$. We claim inductively that 
$g(\Delta) \subset H_{i_k}^{j_k}.$
Let 
$g' = g_{i_{k-1}}^{j_{k-1}} \dots g_{i_1}^{j_1} $ 
so that $g = g_{i_k}^{j_k} g'$.
If $k=1$, then $g'=1$ and 
\begin{equation*}
g \Delta =
g_{i_1}^{j_1} \Delta \subset 
g_{i_1}^{j_1} (\rht - \Bar{H}_{i_1}^{-j_1})\subset
H_{i_1}^{j_1}\subset \rht -\Delta.
\end{equation*}
If $k>1$, then $g_{i_k}^{-j_k}\neq
g_{i_{k-1}}^{j_{k-1}}$ (since $g$ is a
reduced word) so 
$H_{i_k}^{-j_k}\neq H_{i_{k-1}}^{j_{k-1}}$. The
induction hypothesis
\begin{equation*}
g'(\Delta)\subset H_{i_{k-1}}^{j_{k-1}}
\end{equation*}
implies
\begin{equation*}
g \Delta =
g_{i_k}^{j_k}g' \Delta \subset
g_{i_k}^{j_k} H_{i_{k-1}}^{j_{k-1}} \subset
g_{i_k}^{j_k} (\rht - \Bar{H}_{i_k}^{-j_k})  \subset 
H_{i_k}^{j_k}
\end{equation*}
as desired. Thus all of the $\gamma\Delta$, for $\gamma\in\Gamma$,
are disjoint and their closures $\gamma\bD$ tile $\Gamma\bD$.
\end{proof}
\begin{proof}[Proof of Lemma~\ref{lem:ldiscrete}]
Let $x_0\in\Delta$. Since $\Delta$ is open, there exists $\delta_0>0$
such that the $\delta_0$-ball (in the hyperbolic metric $d$ on $\rht$)
about $x_0$ lies in $\Delta$. We have proved that if $g$ is a reduced
word in $g_1,\dots,g_m$ then $g(x_0)\notin \Delta$ so
$d(x_0,g(x_0))>\delta_0$. In particular no sequence of reduced words in $G$
can accumulate on the identity, proving that $G$ is discrete.
\end{proof}


\begin{proof}[Proof of Lemma~\ref{lem:lcomplete}]
We must prove $G\bD=\rht$. We use a completeness argument to show that
the quotient $M = (G\bD)/G$ is actually $\rht/G$. 
We begin by making an abstract model for the universal covering space
$\tilde M$ as the quotient space of the Cartesian
product $\bD\times G$ by the equivalence relation generated by identifications
\begin{equation*}
(x,g) \sim  (g_i^j x, g g^{-j}_i) 
\end{equation*}
where $x\in\partial H^{-j}_i$, $g\in G$ and $(i,j)\in I\times J$. 
Then $M = (G\bD)/G$ inherits a hyperbolic structure from $\rht$.
We show that this hyperbolic structure is complete to prove that
$M=\rht/G$ and thus $B\bD=\rht$.

The identifications define the structure of a smooth manifold on $\tilde M$.
If $x\in\Delta$, then the equivalence class of $(x,g)$ contains only
$(x,g)$ itself, and a smooth chart at $(x,g)$ arises from the smooth
structure on $\Delta$. If $x\in\partial\Delta$, then the equivalence class
of $(x,g)$ equals
\begin{equation*}
\{  (x,g), (g_i^j x, g g^{-j}_i)\} 
\end{equation*}
where $x\in\partial H^{-j}_i$. 
Let $U$ be an open neighborhood of $x$ in $\rht$ which intersects the orbit
$\Gamma x$ in $\{x\}$.
Since $x$ is a boundary point of the smooth
surface-with-boundary $\bD$, the intersection $U\cap\bD$ is a coordinate
patch for $x$ in $\bD$. The image $g_i^j(U-\Delta)$ is a smooth coordinate
patch for $g_i^j x$ in $\bD$ and the image of the union
\begin{equation*}
(U\cap\bD)\times \{g\}\quad \bigcup \quad 
(g_i^j(U-\Delta)) \times \{gg_i^{-j}\} 
\end{equation*}
is a smooth coordinate patch for the equivalence class of $(x,g)$.

The $G$-action defined by
\begin{equation*}
\gamma: (x,g) \longmapsto (x,\gamma g)
\end{equation*}
preserves this equivalence relation and thus defines a $G$-action 
on the quotient $\tilde M$. The map
\begin{align*}
D: \bD\times G   & \longrightarrow  \rht \\
(x,g) & \longmapsto g(x)
\end{align*}
preserves the equivalence relation and defines a $G$-equivariant
map, the {\em developing map\/} $D:\tilde M\longrightarrow\rht$.
The developing map $D$ is a local diffeomorphism onto the open set
$G\bD$. 

Pull back the hyperbolic metric from $\rht$  by $D$ to obtain a Riemannian
metric on $\tilde M$ for which $D$ is a local isometry. Since $G$ acts
isometrically on $\rht$, the developing map $D$ is $G$-equivariant.
We claim $\tilde M$ is geodesically complete. To this end, consider a
maximal unit-speed geodesic ray $\mu(t)$ defined for $0 < t < t_0$.
Its preimage in $\bD\times G$ consists of geodesic segments $\mu_k$ in 
various components $\bD\times \{g_{i_k}\}$. 

We claim there are only finitely many segments $\mu_k$.  Since $\bD$
is convex, one of its segments $\mu_k$ cannot enter and exit $\bD$
from the same side.  Since the defining arcs $A_i^j$ are pairwise
disjoint, the corresponding geodesics $\partial H_i^j$ are pairwise
ultraparallel and the distance between different sides of $\bD$ is
bounded below by $\delta >0$. Since the length of $\mu$ equals $t_0$,
there can be at most $t_0/\delta$ segments $\mu_k$.  

Let $\mu_k:[t_1,t_0)\longrightarrow \bD\times \{g_{i_k}\}$ be the last
geodesic segment.  Since $\bD$ is closed, $\mu_k(t)$ converges as
$t\longrightarrow t_0$, contradicting maximality.

Thus $\tilde M$ is geodesically complete. Since a local isometry
from a complete Riemannian manifold is a covering space,
$D$ is a covering space. The van Kampen 
theorems imply that $\tilde M$ is simply connected, so that $D$ is
a diffeomorphism and hence surjective. Thus $G\bD=\rht$ as desired.
\end{proof}

\subsection{Existence of a small interval}

The following lemma is an elementary fact which is used in the proof
of completeness. We assume that the number $m$ of generators in the 
Schottky group is at least $2$.

\begin{lem} 	\label{lem:small} 
Let $\{A_i^j\mid (i,j)\in I\times J\}$ 
be a collection of disjoint intervals on $S^2\cap\Nn_+$. 
Then there exists an $(i_0,j_0)\in I\times J$ such
that the length $\Phi(A_{i_0}^{j_0}) < \pi/2$.
\end{lem}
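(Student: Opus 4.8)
The plan is to argue by contradiction, comparing the total length of the intervals with the circumference of $S^1$. Suppose every interval satisfies $\Phi(A_i^j)\ge \pi/2$. There are $|I\times J|=2m$ intervals, so the sum of their lengths is at least $2m\cdot(\pi/2)=m\pi$, and since $m\ge 2$ this is at least $2\pi$. On the other hand the intervals are disjoint and all lie on the circle $S^1$, whose total circumference is $2\pi$, so the sum of their lengths cannot exceed $2\pi$. Thus both estimates can only be consistent if equality holds everywhere: $m=2$, and each of the four arcs has length exactly $\pi/2$, with the four open arcs covering all of $S^1$ except finitely many points.

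The one delicate point — and the step I expect to be the real obstacle — is precisely this borderline case $m=2$, where the crude length count yields exactly $2\pi$ and so does not by itself produce a contradiction. To rule it out I would use that in the Schottky configuration the closures $\overline{A_i^j}$ are pairwise disjoint, not merely the open intervals. Disjointness of the closed arcs means that between any two cyclically consecutive arcs there is a gap of strictly positive length, and there are $2m\ge 4$ such gaps; hence the total length of the arcs is \emph{strictly} less than $2\pi$. This strict upper bound is incompatible with the lower bound $m\pi\ge 2\pi$, which completes the contradiction and forces some $\Phi(A_{i_0}^{j_0})<\pi/2$.

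If one wished to avoid invoking disjointness of closures, the inequality $m\pi\ge 2\pi$ is already strict for $m\ge 3$ (there $m\pi\ge 3\pi>2\pi$ contradicts the bound $\le 2\pi$ directly), so only the single case $m=2$ would need the gap argument above. Since the Schottky hypotheses guarantee the $\overline{A_i^j}$ are disjoint in any event, I would present the uniform version: the genuine gaps give total length $<2\pi$, which contradicts $m\pi\ge 2\pi$ for every $m\ge 2$ at once.
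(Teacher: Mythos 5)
Your proof is correct and is essentially the paper's own argument: the paper's proof is the same length count, observing that the $2m\ge 4$ disjoint intervals have total length at most $2\pi$, hence one of them has length less than $2\pi/4=\pi/2$. What you add is precisely the point the paper's two-line version glosses over: strict inequality does not follow from ``total $\le 2\pi$ and $2m\ge 4$'' alone, and indeed with merely disjoint \emph{open} arcs the conclusion can fail when $m=2$ (four open arcs of length exactly $\pi/2$ partitioning $S^1$ minus four points). Your resolution---that in the Schottky configuration the closed arcs $\bar{A}_i^j$ are pairwise disjoint, so the positive gaps between consecutive arcs force the total length to be strictly less than $2\pi$---is exactly the hypothesis the paper implicitly relies on elsewhere (for instance, Lemma~\ref{lem:fxpts} needs a positive minimal angular separation $\theta_0$ between the arcs). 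So your version is the same counting argument, carried out with the care needed to make the borderline case airtight.
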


\begin{proof}
Since the $A_i^j$ are disjoint, their total length is bounded by $2\pi$.
Since there are $2m \ge 4$ of them, 
at least one has length $< (2\pi)/4$ as claimed.
\end{proof}

\subsection{A criterion for $\epsilon$-hyperbolicity}\label{sec:hypcrit}
To determine proper discontinuity of affine Schottky groups, we 
examine sequences of group elements.  
An important case is when for some $\epsilon >0$, every
element of the sequence is $\epsilon$-hyperbolic.  Here is a useful criterion
for such $\epsilon$-hyperbolicity of an entire sequence.  As before,
$A_i^j, (i,j)\in I\times J$,
denote the disjoint intervals associated to the
generators $g_1,\dots ,g_m$ of a Schottky group.
\begin{lem}\label{lem:fxpts}
Let $\theta_0$ be the minimum angular separation between the 
intervals $A_i^j\subset S^1$ and let 
$\epsilon_0 = 2\sin(\theta_0/2)$.
Suppose that 
\begin{equation}\label{eq:reducedword}
g = g_{i_0}^{j_0}\dots g_{i_l}^{j_l}
\end{equation}
is a reduced word. 
If $(i_l,j_l)\neq (i_0,-j_0)$ then $g$ is $\epsilon_0$-hyperbolic.
\end{lem}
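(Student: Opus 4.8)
The plan is to locate the attracting and repelling fixed points $\xp{g}$ and $\xm{g}$ of $g$ inside two \emph{distinct} defining intervals, and then read off the lower bound on $\rho(\xp{g},\xm{g})$ from the minimal gap $\theta_0$ between intervals. First I would record the ping-pong form of the Schottky conditions: taking complements in the two defining identities shows that for each generator and each sign,
\begin{equation*}
g_i^j\bigl(S^1 - \Bar{A}_i^{-j}\bigr) = A_i^j,
\end{equation*}
that is, $g_i^j$ carries everything outside $\Bar{A}_i^{-j}$ into its own interval $A_i^j$. With this in hand I would prove by induction on the length $l$ that for a reduced word $g = g_{i_0}^{j_0}\cdots g_{i_l}^{j_l}$ one has
\begin{equation*}
g\bigl(S^1 - \Bar{A}_{i_l}^{-j_l}\bigr) \subset A_{i_0}^{j_0}.
\end{equation*}
The inductive step applies the ping-pong rule to the outermost letter $g_{i_0}^{j_0}$; reducedness, $(i_0,j_0)\neq(i_1,-j_1)$, guarantees that the interval produced by the shorter word lies in $S^1 - \Bar{A}_{i_0}^{-j_0}$, so the rule applies. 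Since the intervals are pairwise disjoint with a positive gap, the closure issues are harmless.

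Next, the hypothesis $(i_l,j_l)\neq(i_0,-j_0)$ is exactly what forces $A_{i_0}^{j_0} \neq A_{i_l}^{-j_l}$, hence $A_{i_0}^{j_0}\subset S^1 - \Bar{A}_{i_l}^{-j_l}$; feeding this into the displayed inclusion yields $g(A_{i_0}^{j_0})\subset A_{i_0}^{j_0}$. Brouwer's fixed-point theorem, exactly as in Lemma~\ref{lem:Brouwer}, then produces a fixed point of $g$ in $A_{i_0}^{j_0}$, and because $g$ maps $\Bar{A}_{i_0}^{j_0}$ strictly inside itself this fixed point is attracting: $\xp{g}\in A_{i_0}^{j_0}$. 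Applying the identical argument to the reduced word $g^{-1} = g_{i_l}^{-j_l}\cdots g_{i_0}^{-j_0}$ — whose own cyclic-reducedness condition is again $(i_l,j_l)\neq(i_0,-j_0)$ — locates the attracting fixed point of $g^{-1}$, namely $\xm{g}$, in $A_{i_l}^{-j_l}$. Two distinct fixed points on $S^1$ also confirm that $g$ is genuinely hyperbolic, so $\xpm{g}$ and $\xo{g}$ are defined and the term ``$\epsilon_0$-hyperbolic'' makes sense.

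Finally I would convert the separation of intervals into the Euclidean estimate. Since $\xp{g}$ and $\xm{g}$ lie in the distinct intervals $A_{i_0}^{j_0}$ and $A_{i_l}^{-j_l}$, the shorter arc joining them must cross one of the two gaps separating these intervals, so its angular length is at least the minimal gap $\theta_0$. Because a shorter arc has length at most $\pi$ and $t\mapsto 2\sin(t/2)$ is increasing on $[0,\pi]$, the formula $\rho(\va_1,\va_2)=2\sin(\Phi/2)$ gives $\rho(\xp{g},\xm{g})\ge 2\sin(\theta_0/2)=\epsilon_0$, so $g$ is $\epsilon_0$-hyperbolic. I expect the main obstacle to be the ping-pong induction together with the Brouwer localization of the fixed points — in particular, checking that the cyclic-reducedness hypothesis is invoked correctly both for $g$ and for $g^{-1}$, so that $\xp{g}$ and $\xm{g}$ are forced into \emph{different} intervals; by contrast the closing trigonometric estimate is routine.
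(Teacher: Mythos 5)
Your proposal is correct and follows essentially the same route as the paper: locate $\xp{g}\in A_{i_0}^{j_0}$ and $\xm{g}=\xp{g^{-1}}\in A_{i_l}^{-j_l}$, then use disjointness of the intervals to get angular separation at least $\theta_0$ and hence $\rho(\xp{g},\xm{g})\ge 2\sin(\theta_0/2)=\epsilon_0$. The only difference is expository: the paper compresses the ping-pong induction and Brouwer argument for the reduced word $g$ into a citation of Lemma~\ref{lem:Brouwer} (stated there only for the generators), whereas you spell out that extension explicitly --- a worthwhile clarification, not a different proof.
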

The condition $(i_l,j_l)\neq (i_0,-j_0)$ means that 
\eqref{eq:reducedword} describes a {\em cyclically reduced\/} word.
\begin{proof}
By Lemma~\ref{lem:Brouwer},
\begin{equation*}
\xp{g}\in A_{i_0}^{j_0} 
\end{equation*}
and 
\begin{equation*}
\xm{g} = \xp{g^{-1}} \in A_{i_l}^{-j_l}.
\end{equation*}
Since $(i_l,j_l)\neq (i_0,-j_0)$, the vectors
$\xm{g}$ and $\xp{g}$ lie in the attracting interval and repelling
interval respectively. Since they lie in  disjoint conical neighborhoods,
they are separated by at least $\theta_0$.
\end{proof}
This lemma 
is crucial in the analysis of 
sequences of elements of
$\Gamma$ arising from incompleteness. These sequences will all
be $\epsilon$-hyperbolic for some $\epsilon>0$. 
A typical sequence which is not $\epsilon$-hyperbolic for any $\epsilon >0 $
is the following (compare \cite{DrummGoldman1}): 
\begin{equation*}
\gamma_n = g_1^n g_2 g_1^{-n}.
\end{equation*}
Since all the elements are conjugate, the eigenvalues are constant 
(in particular they are bounded). 
However both sequences of eigenvectors $\xm{\gamma_n}, \xp{\gamma_n}$ 
converge to $\xp{g_1}$ so 
$\rho(\xm{\gamma_n}, \xp{\gamma_n}) \longrightarrow 0.$
 
\section{Crooked planes and zigzags}\label{sec:crooked}
Now consider the action of a Schottky group $G\subset\SOoo$ on Minkowski 
$(2+1)$-space $\E$. 
The inverse projectivization $\P^{-1}(\Delta)$ of a fundamental
domain $\Delta$ is a fundamental domain for the linear action of $G$
on the subspace $\Nn$ of timelike vectors.
We extend this fundamental domain to a 
larger open subset of $\E$. The extended fundamental domains are
bounded by polyhedral surfaces called {\em crooked planes.\/} For the
groups of linear transformations, the crooked planes all pass through
the origin --- indeed the origin is a special point of each crooked
plane, its {\em vertex.\/}

The Schottky group $G = \la g_1,\dots,g_m\ra$ acts properly
discontinuously and freely on the open subset $\Nn$ consisting of
timelike vectors in $\rto$.  However, since $G$ fixes the origin, the
$G$-action on all of $\E$ is quite far from being properly discontinuous.  

Then we deform $G$ inside the group of affine isometries of $\E$ to
obtain a group $\Gamma$ which in certain cases acts freely and
properly discontinuously. This {\em affine deformation\/} $\Gamma$ of
$G$ is defined by geometric identifications of a family of {\em disjoint\/} 
crooked planes. The crooked planes bound {\em crooked half-spaces\/}
whose intersection is a {\em crooked polyhedron\/} $X$.  
In \cite{Drumm1}, Drumm proved the remarkable fact that as long as the
crooked planes are disjoint, 
$\Gamma$ acts freely and properly discontinuously on $\E$ with fundamental 
domain $X$.

\subsection{Extending Schottky groups to Minkowski space}
\label{sec:extSchottky1}

When $\Delta$ is a fundamental domain for $G$
acting on $\rht$, its inverse image $\P^{-1}(\Delta)$ is a fundamental domain for the
action of $G$ on $\Nn$.  The faces of $\P^{-1}(\Delta)$ are the
intersections of $\Nn$ with indefinite planes corresponding to the
geodesics in $\rht$ forming the sides of $\Delta$.

Each face $S$ of $\P^{-1}(\Delta)$ extends to a polyhedral surface $\Cc$
in $\rto$ called a {\em crooked plane.\/} The face
$S\subset\P^{-1}(\Delta)$ is the {\em stem\/} of the crooked plane. 
To extend the stem one adds two null half-planes, called the {\em wings,\/} 
along the null lines bounding $S$. 
Crooked planes are more flexible than Euclidean planes
since one can build fundamental polyhedra for free, properly discontinuous
groups from them.

Crooked planes were introduced by Drumm in his doctoral 
dissertation~\cite{Drumm0} (see also \cite{Drumm1,Drumm2}). 
Their geometry is extensively discussed in Drumm-Goldman~\cite{DrummGoldman2}
(see also \cite{ERA}) where their intersections are classified.

\subsection{Construction of a crooked plane}\label{defcp}
Here is an example from which we derive the general definition of a crooked 
plane. The geodesic $l_\vv$ determined by the spacelike vector
\begin{equation*}
\vv = \bmatrix 1 \\ 0 \\ 0 \endbmatrix 
\end{equation*}
corresponds to the set
\begin{equation*}
S_0 = \overline{(\P^{-1}(l_\vv))} 
= \left\{ \bmatrix 0 \\ \vu_2 \\ \vu_3 \endbmatrix \Bigg| 
\vert \vu_3\vert \geq \vert \vu_2\vert   \right\} \subset \Bar{\Nn}
\end{equation*}
which is the {\em stem} of the crooked plane. The two lines bounding $S_0$ are
\begin{align*}
\partial^-S_0 & =  
\left\{ \bmatrix 0 \\ \vu_2 \\ \vu_3 \endbmatrix \Bigg| \vu_2 = \vu_3 \right\} 
\\
\partial^+S_0 & =  
\left\{ \bmatrix 0 \\ \vu_2 \\ \vu_3 \endbmatrix \Bigg| 
\vu_2 = -\vu_3 \right\} 
\end{align*}
and the {\em wings} are the half-planes
\begin{align*}
\Wingm_0 & =  
\left\{ \bmatrix \vu_1 \\ \vu_2 \\ \vu_3 \endbmatrix 
\Bigg| 
\vu_1 \le 0, \vu_2 = \vu_3 \right\} \\
\Wingp_0 & =  
\left\{ \bmatrix \vu_1 \\ \vu_2 \\ \vu_3 \endbmatrix \Bigg| 
\vu_1 \ge 0, \vu_2 = -\vu_3 \right\}. 
\end{align*}
The crooked plane 
$\Cc_0$ is defined as the 
the union 
\begin{equation*}
\Cc_0 = \Wingm_0 \cup S_0 \cup \Wingp_0.
\end{equation*}
(Compare Figure~\ref{fig:cp1}.)
Corresponding to the half-plane $H_\vv\subset\rht$ is the region
\begin{equation*}
\tilde{H}_\vv = 
\left\{ \bmatrix \vu_1 \\ \vu_2 \\ \vu_3 \endbmatrix \in\Nn \ \Bigg| 
\vu_1 > 0, \vu_3 > 0 \right\}
\end{equation*}
and the component of the complement $\E - \Cc_0$ containing
$\tilde{H}_\vv$ is the {\em crooked half-space\/}
\begin{align*}
\Hh(\vv) & = 
\left\{ \bmatrix \vu_1 \\ \vu_2 \\ \vu_3 \endbmatrix \ \Bigg| 
\vu_1 > 0, \vu_2 + \vu_3 > 0 \right\} \\ 
& \quad \bigcup
\left\{ \bmatrix \vu_1 \\ \vu_2 \\ \vu_3 \endbmatrix \ \Bigg| 
\vu_1 = 0, \vu_2 + \vu_3 > -0, \vu_2 + \vu_3 > 0 \right\} \\
& \qquad \bigcup
\left\{ \bmatrix \vu_1 \\ \vu_2 \\ \vu_3 \endbmatrix \ \Bigg| 
\vu_1 < 0, -\vu_2 + \vu_3 > 0 \right\} .
\end{align*}
Now let $\vu\in\Ss$ be any unit-spacelike vector, determining the half-space
$H_\vu\in\rht$. The crooked plane directed by $\vu$ can be defined as
follows, using the previous example.  Let $g\in\SOoo$ such that
$g(\vv)=\vu$.  The {\em crooked plane directed by $\vu$\/} is 
$\Cc(\vu) =  g(\Cc_0)$.

Since $g$ preserves the spacelike, lightlike or timelike nature of a
vector, we see that $\Cc(\vu)$ is composed of a stem flanked by two
tangent wings, just like $\Cc_0$.

The crooked plane is singular at the origin,
which we call the {\em vertex\/} of $\Cc(\vu)$. In general, if $p\in\E$
is an arbitrary point, the {\em crooked plane directed by $\vu$ and
with vertex $p$\/} is defined as:
\begin{equation*}
\Cc(\vu,p) =  p +\Cc(\vu). 
\end{equation*}
Let $\Cc(\vu,p)\subset\E$ be a crooked plane. 

We define the crooked half-space $\Hh(\vu,p)$ to be the component
of the complement $\E-\Cc(\vu,p)$ which is bounded by $\Cc(\vu,p)$
and contains $p+H_{\vu}$. Note that $\E$ decomposes as a disjoint union
\begin{equation*}
\E = \Hh(\vu,p) \cup \Cc(\vu,p) \cup \Hh(-\vu,p).
\end{equation*}
(Crooked half-spaces are called  {\em wedges\/} in 
Drumm~\cite{Drumm0,Drumm1,Drumm2}.)

The {\em angle} $\Phi(\Hh(\vu,p))$ of the crooked half-space
$\Hh(\vu,p)$ is taken to be the angle of $A$, the interval determined
by the half-space $H_\vu$.

\begin{figure}[ht]
\centerline{\epsfxsize=3in \epsfbox{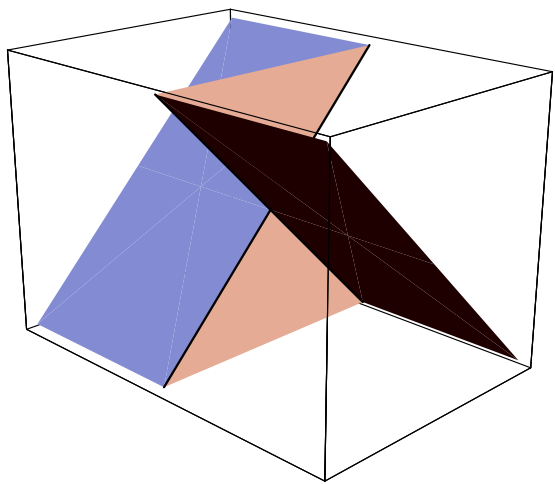}}
\caption{A crooked plane}\label{fig:cp1}
\end{figure}




\subsection{Zigzags}\label{sec:zigzags}
To understand the tiling of $\E$ by crooked polyhedra, we intersect
the tiling with a fixed definite plane $P$, which is always transverse
to the stem and wings of any crooked plane.  Since the tiling only
contains countably many crooked planes, we may assume that $P$ misses
the vertices of the crooked planes in the tiling.
 
A {\em zigzag\/} in a definite plane $P$ is a union $\zeta$ of two disjoint
rays $r_0$ and $r_1$ and the segment $s$ (called the {\em stem\/})
joining the endpoint $v_0$ of $r_0$ to the endpoint $v_1$ of $r_1$, 
such that the two angles $\theta_0$ and $\theta_1$  formed by the rays at the
respective endpoints of $s$ differ by $\pi$ radians. 
The intersection of a crooked plane with a definite plane not
containing its vertex is a zigzag.  (Conversely, every zigzag extends to
a unique crooked plane, although we do not need this fact.)  Compare Figure~\ref{fig:crooked}.

\begin{figure}[ht]
\centerline{\epsfxsize=3in \epsfbox{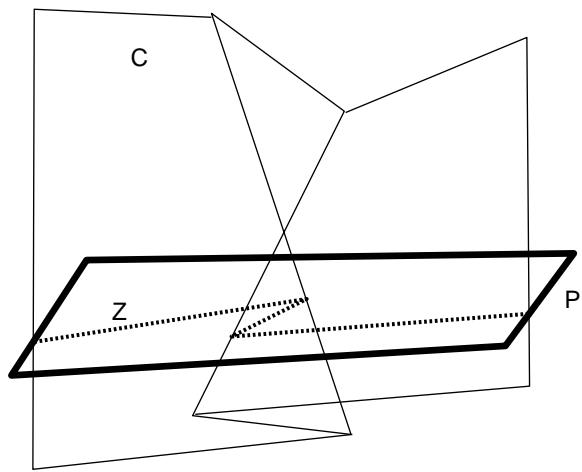}}
\caption
{The intersection of a crooked plane $C$ with a definite plane $P$ 
is a zigzag $Z$.}
\label{fig:crooked}
\end{figure}

\begin{figure}[htb]
\centerline{\epsfxsize=3in \epsfbox{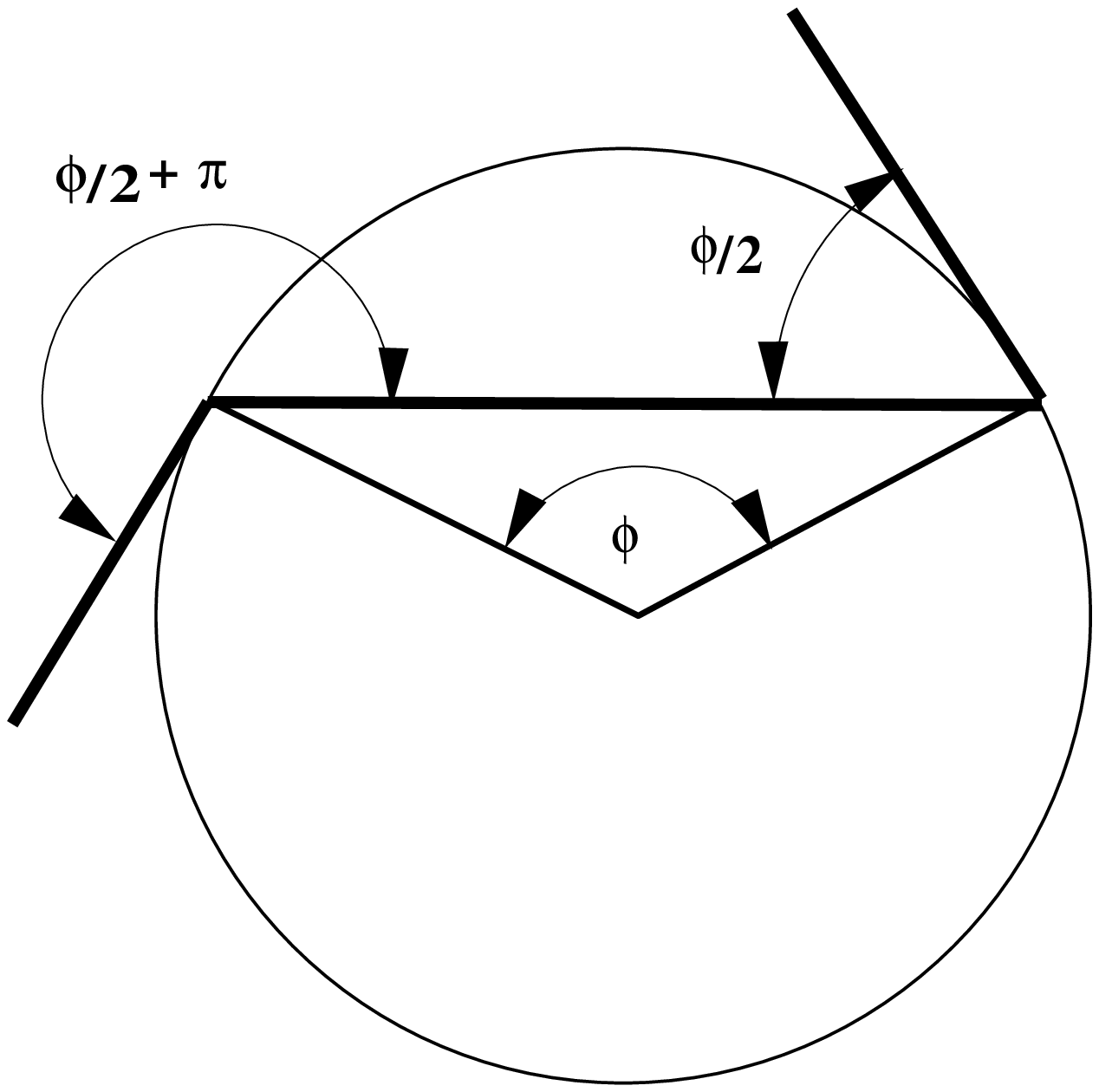}}
\caption{Angles in a zigzag}
\label{fig:angles}
\end{figure}


A {\em zigzag region\/} is a component $\zz$ of $P-\zeta$ where
$\zeta\subset P$ is a zigzag. Equivalently $\zz$ is the
intersection of a crooked half-space with $P$.
The corresponding half-space in $\rht$ is bounded by an interval $A\subset S^1$
whose length $\Phi(A)$ is defined as the 
{\em angle\/} $\Phi(\zz)$ of the zigzag region. One of the angles of
$\zz$ 
equals $\Phi(A)/2$ and the other, $\Phi(A)/2+\pi$.
Compare Figure~\ref{fig:angles}.

If $r\subset P$ is an open ray contained in the zigzag region $\zz$,
then $r$ lies in a unique maximal open ray inside $\zz$ (just move the
endpoint of $r$ back until it meets $\zeta$). Every maximal open ray
then lies in one of two angular sectors (possibly both).  One angular
sector has vertex $v_0$ and sides $r_0$ and $s$, and subtends the
angle $\theta_0$.  The other angular sector has vertex $v_1$ and sides
$r_1$ and $s$, and subtends the angle $\theta_1$.  Any two rays in $R$
make an angle of at most $\Phi(\zeta)$ with each other.

\subsection{Affine deformations}\label{sec:affineSchottky}
Consider a group $\Gamma$ generated by isometries
\begin{equation*}
h_1,\dots, h_m\in\iso  
\end{equation*}
for which there exist crooked half-spaces $\Hh_i^j$, 
where $(i,j)\in I\times J$
(using the indexing convention defined in \S\ref{sec:Schottky})
such that:
\begin{itemize}
\item all the $\Hh_i^j$ are pairwise disjoint;
\item $h_i(\Hh_i^-)= \E - \Bar{\Hh}_i^+$. (Thus
$h_i^{-1}(\Hh_i^+)= \E - \Bar{\Hh}_i^-$ as well.) 
\end{itemize}
We call $\Gamma$ an {\em affine Schottky group.\/}  The set 
\begin{equation*}
X = \E - \bigcup_{(i,j)\in I\times J} \Bar{\Hh}_i^j 
\end{equation*}
is an open subset of $\E$ whose closure $\bX$ is a finite-sided polyhedron
in $\E$ bounded by the crooked planes $\Cc_i^j=\partial\Hh_i^j$.

Drumm-Goldman~\cite{DrummGoldman2} provides criteria for
disjointness of crooked half-spaces. 
In particular for every configuration of disjoint half-planes
\begin{equation*}
H_1^+,H_1^-,\dots,H_m^+,H_m^-\subset\rht 
\end{equation*}
paired by Schottky generators 
$g_1,\dots,g_m\in\SOoo$, there exists a configuration
of disjoint crooked half-spaces
\begin{equation*}
\Hh_1^+,\Hh_1^-,\dots,\Hh_m^+,\Hh_m^-\subset\E
\end{equation*}
whose directions correspond to the $H_i^j$, and which are paired by affine transformations
$h_i$ with linear part $g_i$, satisfying the above conditions.

We show that $\bX$ is a fundamental polyhedron for $\Gamma$ acting 
on $\E$. As with standard Schottky groups, one first shows that
the images $h\bX$ form a set of {\em disjoint\/} tiles of $\Gamma\bX$.

\begin{lem} $\Gamma\bX$ is open.\end{lem}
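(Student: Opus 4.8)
The plan is to show that every point of $\Gamma\bX=\bigcup_{\gamma\in\Gamma}\gamma\bX$ has a neighborhood contained in $\Gamma\bX$. First I would note that $\Gamma\bX$ is $\Gamma$-invariant, since left multiplication by any $\gamma_0\in\Gamma$ merely permutes the translates $\gamma\bX$. Hence a point $q=\gamma p$ with $p\in\bX$ is interior to $\Gamma\bX$ if and only if $p$ is interior to $\gamma^{-1}(\Gamma\bX)=\Gamma\bX$. It therefore suffices to prove that every $p\in\bX$ is an interior point of $\Gamma\bX$. If $p$ lies in the open set $X=\E-\bigcup_{(i,j)}\Bar{\Hh}_i^j$ there is nothing to prove, so the content is entirely at the boundary $\partial\bX=\bigcup_{(i,j)}\Cc_i^j$, where I would argue that the face-pairing produces a neighboring tile filling in the missing side.

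Consider a boundary point $p\in\Cc_i^-$, the case $p\in\Cc_i^+$ being symmetric under replacing $h_i^{-1}$ by $h_i$. Because the crooked planes are pairwise disjoint, $p$ lies on no other $\Cc_k^l$, so in a sufficiently small neighborhood the only relevant half-space is $\Hh_i^-$ and there $\bX$ coincides with $\E-\Hh_i^-$. By the decomposition $\E=\Hh_i^-\cup\Cc_i^-\cup(\E-\Bar{\Hh}_i^-)$ into the two crooked half-spaces bounded by $\Cc_i^-$ and the plane itself, near $p$ the tile $\bX$ already covers the face $\Cc_i^-$ together with the complementary side $\E-\Bar{\Hh}_i^-$. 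The one missing local piece is $\Hh_i^-$, and the key claim is that the neighboring tile $h_i^{-1}\bX$ supplies exactly this piece.

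To verify the claim I would use the face-pairing relation $h_i^{-1}(\Hh_i^+)=\E-\Bar{\Hh}_i^-$, which gives both $h_i(\Cc_i^-)=\Cc_i^+$ and, on taking complements, $h_i^{-1}(\E-\Hh_i^+)=\Bar{\Hh}_i^-$. Since $h_i$ is a homeomorphism of $\E$ carrying $\Cc_i^-$ onto the face $\Cc_i^+$ of $\bX$, the image $h_i(p)$ lies on $\Cc_i^+$, and by the same local description as above $\bX$ contains $(\E-\Hh_i^+)\cap N$ for some neighborhood $N$ of $h_i(p)$. Applying $h_i^{-1}$ yields $h_i^{-1}\bX\supset\Bar{\Hh}_i^-\cap h_i^{-1}(N)$, where $h_i^{-1}(N)$ is a neighborhood of $p$. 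Consequently, within a neighborhood of $p$ the union $\bX\cup h_i^{-1}\bX$ contains $(\E-\Hh_i^-)\cup\Bar{\Hh}_i^-=\E$, so $p$ is interior to $\Gamma\bX$. Together with the interior case this shows every point of $\bX$, hence of $\Gamma\bX$, is interior.

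\textbf{Main obstacle.} The delicate point is the behavior at the singular locus of a crooked plane --- its vertex and the two creases where the stem meets the wings --- where one might fear that $\bX$ and $h_i^{-1}\bX$ fail to abut cleanly. I would finesse this by never decomposing $\Cc_i^-$ into stem and wings: the argument uses only that $\Cc_i^-$ two-sidedly separates $\E$ into the disjoint pieces $\Hh_i^-$, $\Cc_i^-$, $\E-\Bar{\Hh}_i^-$, a fact already recorded for crooked planes, so that no case analysis of edges or vertices is required. The pairwise disjointness of the crooked planes is exactly what legitimizes the local identity ``$\bX=\E-\Hh_i^-$ near $p$'', and I would flag that as the single place this hypothesis enters.
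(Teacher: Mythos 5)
Your proof is correct and takes essentially the same approach as the paper: interior points of $X$ are immediate, and at a point of a face $\Cc_i^j$ the face-pairing generator supplies the neighboring tile that fills in the missing crooked half-space, with disjointness of the crooked planes justifying the local identification of $\bX$ with one side of $\Cc_i^j$. If anything, you verify the gluing step more explicitly (that $h_i^{-1}\bX$ locally coincides with $\Bar{\Hh}_i^-$ near $p$) than the paper, whose proof simply asserts that the orbit of $(B\cap\bX)\cup(h_i^{-j}B\cap\bX)$ is an open neighborhood of the boundary point.
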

\begin{proof}
(Compare the proof of Lemma~\ref{lem:lcomplete}.)
If $x\in X$, then $\gamma x$ is an interior point of $\Gamma X \subset
\Gamma\Bar{X}$ for every $\gamma\in\Gamma$.
Otherwise suppose $x\in\partial X$. Then 
$x\in \Cc_i^j$ for some $(i,j)\in I\times J$.
Let $B$ be an open ball about $x$ such
that $B\cap\partial X\subset \Cc_i^j$. Then 
\begin{equation*}
(B\cap\bX)  \cup (h_i^{-j}B\cap\bX) \subset \bX
\end{equation*}
is an open subset of $\bX$ whose orbit is an open neighborhood of $x$
in $\Gamma\bX$.
\end{proof}
The analogue of Lemma~\ref{lem:ldisjoint} is:
\begin{lem}\label{lem:adisjoint}
The affine transformations $h_1,\dots,h_m$ freely generate $\Gamma$. The crooked polyhedron
$\bX$ is a fundamental domain for $\Gamma$ acting on $\Gamma\bX$.
\end{lem}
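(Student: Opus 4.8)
The plan is to mimic the proof of Lemma~\ref{lem:ldisjoint} verbatim, replacing hyperbolic half-spaces $H_i^j$ by crooked half-spaces $\Hh_i^j$ and using the defining relations $h_i(\Hh_i^-)=\E-\Bar{\Hh}_i^+$ (and its inverse) in place of the corresponding relations for $g_i$. First I would take a reduced word $h=h_{i_k}^{j_k}\dots h_{i_1}^{j_1}$ in the generators and prove, by induction on the word length $k$, the containment
\begin{equation*}
h(X)\subset\Hh_{i_k}^{j_k}.
\end{equation*}
The base case $k=1$ follows directly: since $X$ is disjoint from all the $\Bar{\Hh}_i^j$, in particular $X\subset\E-\Bar{\Hh}_{i_1}^{-j_1}$, and applying the defining relation gives $h_{i_1}^{j_1}(X)\subset\Hh_{i_1}^{j_1}$. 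For the inductive step, write $h=h_{i_k}^{j_k}h'$ where $h'$ is the shorter reduced word; the induction hypothesis places $h'(X)\subset\Hh_{i_{k-1}}^{j_{k-1}}$, and reducedness of the word ($h_{i_k}^{-j_k}\neq h_{i_{k-1}}^{j_{k-1}}$) forces $(i_k,-j_k)\neq(i_{k-1},j_{k-1})$, so by pairwise disjointness of the half-spaces we have $\Hh_{i_{k-1}}^{j_{k-1}}\subset\E-\Bar{\Hh}_{i_k}^{-j_k}$. Applying $h_{i_k}^{j_k}$ and the defining relation then yields $h(X)\subset\Hh_{i_k}^{j_k}$ as required.

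Once the containment is established, the two conclusions follow immediately. Since $X$ is disjoint from every $\Bar{\Hh}_i^j$, while $h(X)\subset\Hh_{i_k}^{j_k}$ for any nontrivial reduced word, we get $h(X)\cap X=\emptyset$ whenever $h\neq 1$; passing to closures, $h\bX\cap\bX=\emptyset$ unless $k=1$ (where the tiles may share boundary crooked planes). In particular no nontrivial reduced word can equal the identity, so the $h_i$ freely generate $\Gamma$; and the disjointness of the open tiles $\gamma X$ together with the fact that their closures $\gamma\bX$ cover $\Gamma\bX$ exhibits $\bX$ as a fundamental domain for the $\Gamma$-action on $\Gamma\bX$.

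I expect the only subtle point to be the bookkeeping around closures versus interiors — specifically verifying that the defining relation $h_i(\Hh_i^-)=\E-\Bar{\Hh}_i^+$ interacts correctly with the complements $\E-\Bar{\Hh}_i^{-j}$ and that disjointness of the open half-spaces $\Hh_i^j$ really does give the containment $\Hh_{i_{k-1}}^{j_{k-1}}\subset\E-\Bar{\Hh}_{i_k}^{-j_k}$ needed to feed the relation. This is where the hypothesis that \emph{all} the $\Hh_i^j$ are pairwise disjoint (not merely the complements $\E-\Delta_i$, as in the hyperbolic case) does the essential work, and it is worth stating the inclusion of closures explicitly rather than just the open sets. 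The rest is a direct transcription of the Schottky argument, since the combinatorics of the ping-pong is identical; no properties of crooked planes beyond the disjointness hypothesis and the pairing relations enter this lemma. The harder completeness statement $\Gamma\bX=\E$ is deferred to the later sections.
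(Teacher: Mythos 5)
Your proposal is correct and takes essentially the same approach as the paper: the paper's entire proof of Lemma~\ref{lem:adisjoint} is the statement that it is ``completely identical'' to the proof of Lemma~\ref{lem:ldisjoint}, replacing the half-planes $H_i^j$ by the crooked half-spaces $\Hh_i^j$, the polygon $\Delta$ by the crooked polyhedron $X$, and the isometries $g_i$ by the affine isometries $h_i$ --- which is precisely the ping-pong induction you transcribe. Your observation that disjointness of the \emph{open} half-spaces already forces $\Hh_{i_{k-1}}^{j_{k-1}}\cap\Bar{\Hh}_{i_k}^{-j_k}=\emptyset$ (since an open set meeting the closure of another open set must meet that set) is a correct elaboration of a point the paper leaves implicit.
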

\begin{proof}
The proof is completely identical to that of
Lemma~\ref{lem:ldisjoint}.  Replace the hyperbolic half-spaces $H_i^j$
by crooked half-spaces $\Hh_i^j$, the hyperbolic polygon $\Delta$
by the crooked polyhedron $X$ and the hyperbolic isometries $g_i$ by
Lorentzian affine isometries $h_i$.
\end{proof}
%
The most difficult part of the proof of the Main Theorem is that 
$\Gamma\bX=\E$, 
that is, the images $\gamma\bX$ tile {\em all\/} of $\E$. 
Due to the absence of an invariant {\em Riemannian\/} metric,
the completeness proof of Lemma~\ref{lem:lcomplete} fails.

\section{Completeness}
We 
prove that the images of the crooked polyhedron $X$ tile
$\E$. We suppose there exists a point $p$ not in 
$\Gamma\bX$ and derive a contradiction.

The first step is to describe a sequence of nested crooked half-spaces
$\HH_k$ containing $p$.  This sequence corresponds to a sequence of
indices 
\begin{equation*}
(i_0,j_0), (i_1,j_1),\dots,(i_k,j_k),\dots 
\end{equation*}
such that $\HH_k
= \gamma_k \Hh_{i_k}^{j_k}$ where $\gamma_k = h_{i_0}^{j_0}\dots
h_{i_{k-1}}^{j_{k-1}}$.

Since crooked polyhedra are somewhat complicated and the elements of
$\Gamma$ exhibit different dynamical behavior in different directions,
bounding the separation of the crooked polyhedra requires some care.
To simplify the discussion we intersect this sequence with a fixed
definite plane $P$ so that the crooked half-spaces $\HH_k$ intersect
$P$ in a sequence of nested zigzag regions containing $p$.
We then approximate the zigzag regions by half-planes $\Pi_k\subset P$
(compare Figures~\ref{fig:cluster} and \ref{fig:seqzz})
and show that for infinitely many $k$, the distance between the successive
lines $L_k=\partial\Pi_k$ bounding $\Pi_k$ is bounded below, to reach
the contradiction. The Compression Lemma~\ref{lem:CompressionLemma}
gives a lower bound for $\rho(L_k,L_{k+1})$ whenever $\gamma_k$ is 
$\epsilon$-hyperbolic. Using the special form of the sequence
$\gamma_k$ and 
the Hyperbolicity Criterion (Lemma~\ref{lem:fxpts}), we find infinitely
many $\epsilon$-hyperbolic $\gamma_k$ for some $\epsilon>0$ and achieve
a contradiction.

\subsection{Construction of the nested sequence}\label{sec:sequence}
The complement $\E - \bX$ consists of the $2m$ crooked half-spaces
$\Hh_i^j$, which are  
bounded by crooked planes $\Cc_i^j$, 
indexed by $I\times J$.

\begin{lem}\label{lem:nestedsequence}
Let $p\in\E - \Gamma\bX$. There exists a sequence
$\{\HH_k\}$  of crooked half-spaces such that
\begin{itemize}
\item $\HH_k\supset\HH_{k+1}$ and $\HH_k\neq\HH_{k+1}$;
\item $p\in\HH_k$;
\item there exists a sequence $(i_0,j_0),(i_1,j_1),\dots,(i_n,j_n),\dots$
in $I\times J$ such that 
$(i_k,j_k)\neq (i_{k+1},-j_{k+1})$ for all $k\ge 0$ and
\begin{equation*}
\HH_k = \gamma_k \Hh_{i_k}^{j_k}
\end{equation*}
where
\begin{equation*}
\gamma_k = h_{i_0}^{j_0} h_{i_1}^{j_1} \dots h_{i_{k-1}}^{j_{k-1}}. 
\end{equation*}
\end{itemize}
\end{lem}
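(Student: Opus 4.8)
The plan is to transport the ping-pong argument of Lemma~\ref{lem:ldisjoint} to $\E$ and run it ``in reverse'' on the single forbidden point $p$. Write $h_i^+ = h_i$ and $h_i^- = h_i^{-1}$, so that the pairing hypothesis reads $h_i^j(\Hh_i^{-j}) = \E - \Bar{\Hh}_i^j$ for every $(i,j)\in I\times J$. I will repeatedly use two consequences of this, both obtained by chasing the disjoint decomposition $\E = \Hh_i^{-j} \sqcup \Cc_i^{-j} \sqcup (\E - \Bar{\Hh}_i^{-j})$ through the homeomorphism $h_i^j$: namely $h_i^{-j}(\Hh_i^{j}) = \E - \Bar{\Hh}_i^{-j}$, and $h_i^{j}(\E - \Bar{\Hh}_i^{-j}) = \Hh_i^{j}$. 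The object I track is the pulled-back point $q_k := \gamma_k^{-1}p$; since $p\notin\Gamma\bX$ we have $q_k\notin\bX$ for every $k$, so $q_k$ always lies in exactly one of the $2m$ crooked half-spaces.

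For the base case, $p\notin\bX$ puts $p$ in some $\Hh_{i_0}^{j_0}$; I set $\gamma_0 = 1$, $\HH_0 = \Hh_{i_0}^{j_0}$, and then $q_0 = p\in\Hh_{i_0}^{j_0}$ with $p\in\HH_0 = \gamma_0\Hh_{i_0}^{j_0}$. For the inductive step, suppose $(i_0,j_0),\dots,(i_k,j_k)$, $\gamma_k$, and $\HH_k=\gamma_k\Hh_{i_k}^{j_k}\ni p$ have been built with $q_k\in\Hh_{i_k}^{j_k}$. I put $\gamma_{k+1} = \gamma_k h_{i_k}^{j_k}$, so that $q_{k+1} = \gamma_{k+1}^{-1}p = h_{i_k}^{-j_k}q_k$ lies in $h_{i_k}^{-j_k}(\Hh_{i_k}^{j_k}) = \E - \Bar{\Hh}_{i_k}^{-j_k}$; in particular $q_{k+1}\notin\Hh_{i_k}^{-j_k}$. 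Since also $q_{k+1}\notin\bX$, it lands in some $\Hh_{i_{k+1}}^{j_{k+1}}$, and disjointness of the half-spaces forces $(i_{k+1},j_{k+1})\neq(i_k,-j_k)$, which is precisely the stated condition $(i_k,j_k)\neq(i_{k+1},-j_{k+1})$. Setting $\HH_{k+1}=\gamma_{k+1}\Hh_{i_{k+1}}^{j_{k+1}}$ gives $p=\gamma_{k+1}q_{k+1}\in\HH_{k+1}$.

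It remains to check the strict nesting. The reduced condition together with pairwise disjointness of the closed half-spaces gives the proper inclusion $\Hh_{i_{k+1}}^{j_{k+1}}\subset\E-\Bar{\Hh}_{i_k}^{-j_k}$. Applying the identity $h_{i_k}^{j_k}(\E-\Bar{\Hh}_{i_k}^{-j_k})=\Hh_{i_k}^{j_k}$ yields $h_{i_k}^{j_k}(\Hh_{i_{k+1}}^{j_{k+1}})\subsetneq\Hh_{i_k}^{j_k}$, and applying the bijection $\gamma_k$ then gives $\HH_{k+1}=\gamma_k h_{i_k}^{j_k}\Hh_{i_{k+1}}^{j_{k+1}}\subsetneq\gamma_k\Hh_{i_k}^{j_k}=\HH_k$, establishing both $\HH_k\supset\HH_{k+1}$ and $\HH_k\neq\HH_{k+1}$.

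This is just the affine Schottky ping-pong run on one point, so I expect no serious obstacle; the argument is formal bookkeeping with the pairing relations. The two points needing care are the image computation $h_i^j(\E-\Bar{\Hh}_i^{-j})=\Hh_i^j$ (the image is genuinely the \emph{opposite} half-space $\Hh_i^j$, not the complement of $\Hh_i^{-j}$, and this emerges only after tracking where the boundary crooked plane $\Cc_i^{-j}$ goes), and the implicit use that the closures $\Bar{\Hh}_i^j$ are pairwise disjoint, which is what upgrades ``$q_{k+1}\notin\Hh_{i_k}^{-j_k}$'' to the clean inclusion $\Hh_{i_{k+1}}^{j_{k+1}}\subset\E-\Bar{\Hh}_{i_k}^{-j_k}$ used in the nesting step; this holds because the defining intervals $A_i^j$ are separated by a positive angle.
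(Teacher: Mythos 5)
Your proof is correct and follows essentially the same ping-pong induction as the paper: track $q_k=\gamma_k^{-1}p\notin\bX$, use the pairing identity $h_{i_k}^{-j_k}(\Hh_{i_k}^{j_k})=\E-\Bar{\Hh}_{i_k}^{-j_k}$ to exclude the index $(i_k,-j_k)$, and take $\Hh_{i_{k+1}}^{j_{k+1}}$ to be the half-space containing $q_{k+1}$. The only difference is that the paper's proof first replaces $p$ by an orbit point $\gamma p$ (using Lemma~\ref{lem:small}) so that $\Phi(\HH_0)<\pi/2$ --- a normalization not demanded by the statement you were asked to prove, but needed later in \S\ref{sec:approx} and Lemma~\ref{lem:wunu}.
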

\begin{proof}
We first adjust $p$ so that the first crooked half-space $\HH_0$ satisfies
$\Phi(\HH_0)<\pi/2$.
By Lemma~\ref{lem:small}, 
\begin{equation}\label{eq:firstonesmall}
\Phi(A_{i_0}^{j_0})<\pi/2  
\end{equation}
for some $(i_0,j_0)\in I\times J$.
Since $p\notin\bX$, there exists $(i,j)$ such that $p\in\Hh_i^j$.  If
$(i,j)\neq(i_0,j_0)$, then we replace $p$ by $\gamma p$, for some
$\gamma\in\Gamma$ such that $\gamma p\in\Hh_{i_0}^{j_0}$.  Here is how
we do this.  If
$(i,j)\neq(i_0,-j_0)$, then $\gamma=h_{i_0}^{j_0}$ moves $p$ into
$Hh_{i_0}^{j_0}$. 
Otherwise first move $p$ into
a crooked half-space other than $\Hh_{i_0}^{-j_0}$, then into 
$\Hh_{i_0}^{j_0}$. For example, $h_{i_1}^{j_1}$ moves $p$ into
$\Hh_{i_1}^{j_1}$ and then $\gamma = h_{i_0}^{j_0}h_{i_1}^{j_1}$ moves $p$ 
into $\Hh_{i_0}^{j_0}$. Thus we may assume that
\begin{equation*}
p \in \HH_0 =  \Hh_{i_0}^{j_0}
\end{equation*}
where $\Phi(\HH_0)<\pi/2$.

Suppose inductively that 
\begin{equation*}
\HH_0 \supset \dots \supset \HH_k \ni p
\end{equation*}
is a nested sequence of crooked half-spaces containing $p$ satisfying
the conclusions of Lemma~\ref{lem:nestedsequence}. Then 
$\HH_k = \gamma_k \Hh_{i_k}^{j_k}$ and $\gamma_k^{-1}(p)\in 
\Hh_{i_k}^{j_k}$.  Let $\gamma_{k+1}=\gamma_k h_{i_k}^{j_k}$.  
Thus 
\begin{equation*}
\gamma_{k+1}^{-1}(p)\in h_{i_k}^{-j_k}\Hh_{i_k}^{j_k} = 
\E - \Hh_{i_k}^{-j_k}. 
\end{equation*}
Since $p\notin\Gamma\bX$,
\begin{equation*}
\gamma_{k+1}^{-1}(p)\in 
\E - \bX - \Hh_{i_k}^{-j_k} = 
\bigcup_{(i,j)\neq(i_k,-j_k)} \Hh_i^j. 
\end{equation*}
Let $(i_{k+1},j_{k+1})$ index the component
of $\E - \bX - \Hh_{i_k}^{-j_k}$ containing
$\gamma_{k+1}^{-1}(p)$. This gives the desired sequence.
\end{proof}

\subsection{Uniform Euclidean width of crooked polyhedra}\label{sec:uniform}
If $S\subset\Gamma\bX$, define the {\em star-neighborhood\/} of $S$ 
as the interior of the union of all tiles $\gamma\bX$ intersecting 
$\bar{S}$.

\begin{lem}\label{lem:delta0}
There exists $\delta_0>0$ such that 
the $\delta_0$-neighborhood $B(X,\delta_0)$ 
lies in the star-neighborhood of $\bX$.
In particular whenever $(i,j),(i',j')\in I\times J$ satisfy
$(i,j)\neq (i',-j')$,
\begin{equation}\label{eq:uniformdistance}
B(\E-\Hh_i^j,\delta_0) \subset  \E-h_i^j\bar{\Hh}_{i'}^{j'}.
\end{equation}
\end{lem}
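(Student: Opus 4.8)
The plan is to establish the quantitative inclusion \eqref{eq:uniformdistance} first, with an explicit $\delta_0$, and then to deduce the star-neighborhood statement from it. For the deduction, take $y\in B(X,\delta_0)$ and choose $x\in X$ with $\rho(x,y)<\delta_0$. If $y\in\bX$ there is nothing to prove, so suppose $y\notin\bX$; since the $\Hh_i^j$ are disjoint, $y\in\Hh_i^j$ for a unique $(i,j)\in I\times J$. As $X\subset\E-\Hh_i^j$ we have $y\in B(\E-\Hh_i^j,\delta_0)$, so \eqref{eq:uniformdistance} gives $y\notin h_i^j\Bar{\Hh}_{i'}^{j'}$ for every $(i',j')\neq(i,-j)$. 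For the one remaining index, the pairing relation $h_i^j(\Hh_i^{-j})=\E-\Bar{\Hh}_i^j$ yields $h_i^j\Bar{\Hh}_i^{-j}=\E-\Hh_i^j$, and since $y\in\Hh_i^j$ we also get $y\notin h_i^j\Bar{\Hh}_i^{-j}$. Hence $(h_i^j)^{-1}y$ lies in none of the $\Bar{\Hh}_{i'}^{j'}$, that is $(h_i^j)^{-1}y\in X$, so $y\in h_i^jX$. The tile $h_i^j\bX$ meets $\bX$ along the face $\Cc_i^j$, so $h_i^jX$ is an open set lying in the star-neighborhood of $\bX$, and $y$ lies in it.

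It then remains to prove \eqref{eq:uniformdistance}, and I would begin with the purely algebraic observation that both sides are disjoint closed crooked half-spaces. Using $h_i^j(\Hh_i^{-j})=\E-\Bar{\Hh}_i^j$ again, the set $\E-\Hh_i^j=h_i^j\Bar{\Hh}_i^{-j}$ is the closed crooked half-space on the $X$-side of the crooked plane $\Cc_i^j$. On the other hand, when $(i',j')\neq(i,-j)$ the half-spaces $\Hh_{i'}^{j'}$ and $\Hh_i^{-j}$ are distinct members of the pairwise disjoint family, so $\Bar{\Hh}_{i'}^{j'}\cap\Bar{\Hh}_i^{-j}=\emptyset$; applying the homeomorphism $h_i^j$ then shows $h_i^j\Bar{\Hh}_{i'}^{j'}\subset\Bar{\Hh}_i^j$ and that this set is disjoint from $\Cc_i^j=h_i^j\Cc_i^{-j}$. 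Consequently $\E-\Hh_i^j$ and $h_i^j\Bar{\Hh}_{i'}^{j'}$ are two disjoint closed crooked half-spaces, separated by the crooked plane $\Cc_i^j$.

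The heart of the matter, and the step I expect to be the main obstacle, is to pass from this disjointness to a positive \emph{Euclidean} distance: the two half-spaces are unbounded and $h_i^j$ is only a Lorentzian isometry, so positivity of $\rho(\E-\Hh_i^j,\,h_i^j\Bar{\Hh}_{i'}^{j'})$ is not automatic, since two disjoint closed unbounded sets can have zero Euclidean distance. The point is that the separation is governed at infinity by the null directions of the wings, which lie in the disjoint intervals $A_i^j\subset S^1$ and are therefore separated by the fixed angular gap $\theta_0>0$; away from the bounded cores near the finitely many vertices the two half-spaces diverge, forcing the distance to be strictly positive. I would make this precise using the classification of crooked-plane intersections in Drumm--Goldman~\cite{DrummGoldman2}, which shows that disjointness of crooked planes is stable under Euclidean thickening. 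Finally, since $(i,j)$ and $(i',j')$ range over the finite set $I\times J$, only finitely many pairs of half-spaces arise; taking $\delta_0$ to be the minimum of these finitely many positive distances gives the uniform bound, establishing \eqref{eq:uniformdistance} and hence the lemma.
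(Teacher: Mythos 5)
Your reduction is the right one, and the parts you prove are proved correctly: deducing the star-neighborhood statement from \eqref{eq:uniformdistance}, the identity $h_i^j\Bar{\Hh}_i^{-j}=\E-\Hh_i^j$, the inclusion $h_i^j\Bar{\Hh}_{i'}^{j'}\subset\Hh_i^j$, and the finiteness argument all match the paper's proof (which runs the same reduction, just proving the separation of the crooked planes first and reading off both assertions). The gap is exactly at the step you flag as the heart of the matter: you never prove that the two disjoint closed sets are a positive Euclidean distance apart, and the mechanism you sketch for it is not valid. Disjointness plus conical behavior at infinity plus an angular gap between distinguished asymptotic directions does \emph{not} force positive distance: for example, the closed convex cones $C_1=\{z\le x-\sqrt{x^2+y^2}\}$ and $C_2=\{z\ge\sqrt{x^2+y^2}-x\}$ in $\R^3$ meet only along a single ray, and after translating $C_2$ by $(0,\delta,0)$ they become disjoint while their Euclidean distance is zero (over $y=\delta/2$, $x=t$, the two boundary sheets are $2\bigl(\sqrt{t^2+\delta^2/4}-t\bigr)\to0$ apart). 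All asymptotic directions of these cones except one are ``separated by an angular gap,'' yet the sets hug each other at infinity. So something beyond conicality and direction-counting must be used. Likewise, the appeal to Drumm--Goldman for ``disjointness of crooked planes is stable under Euclidean thickening'' is essentially citing the statement to be proved: that paper classifies intersections and gives disjointness criteria, and extracting a uniform thickening bound from those criteria is precisely the work your proposal omits.

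What the paper actually uses, and what your argument is missing, is \emph{flatness}: a crooked plane is a union of four closed planar pieces --- the two null half-plane wings and the two quadrant-like sectors of the stem --- each a polyhedral subset of an affine plane. Two disjoint closed polyhedral flat regions in Euclidean space are a positive distance apart (this is where polyhedrality is essential; it fails for curved regions, as the cone example shows), so two disjoint crooked planes are a positive distance apart, and $\delta_0$ is taken below the minimum of the finitely many distances $\rho(\Cc_i^j,\,h_i^j\Cc_{i'}^{j'})$. With that fact supplied, your argument closes up. One smaller point, which your write-up shares with the paper's implicit reading of the hypotheses: you pass from disjointness of the open half-spaces $\Hh_{i'}^{j'}$ and $\Hh_i^{-j}$ to disjointness of their closures. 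This does not follow formally (disjoint open sets can have intersecting closures); it is part of the intended hypothesis that the bounding crooked planes themselves are pairwise disjoint, and it is needed both for your inclusion $h_i^j\Bar{\Hh}_{i'}^{j'}\subset\Hh_i^j$ and for the paper's own choice of $\delta_0$.
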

\begin{proof}
The fundamental polyhedron $\bX$ is bounded by crooked planes
$\Cc_i^j=\partial\Hh_i^j$. 
The star-neighborhood of $\bX$ equals
\begin{equation*}
\bX \cup \bigcup_{(i,j)\in I\times J} h_i^j\bX.
\end{equation*}
Its complement consists of the $2m(2m-1)$ crooked half-spaces
$h_i^j \Hh_{i'}^{j'}$ where $(i',j')\neq(i,-j)$.
Unlike in hyperbolic space, two disjoint closed planar regions in Euclidean
space are separated a positive distance apart.
Four closed planar regions comprise a crooked plane.
Thus the distance between disjoint crooked
planes is strictly positive. Choose $\delta_0>0$ to be smaller than
the distance between any of the $\Cc_i^j$ and $h_i^j\Cc_{i'}^{j'}$.
The second assertion follows since the $\delta_0$-neighborhood of a crooked
half-space $\HH$ equals $\HH\cup B(\partial\HH,\delta_0)$.
\end{proof}
\begin{figure}[ht]
\centerline{\epsfxsize=3in \epsfbox{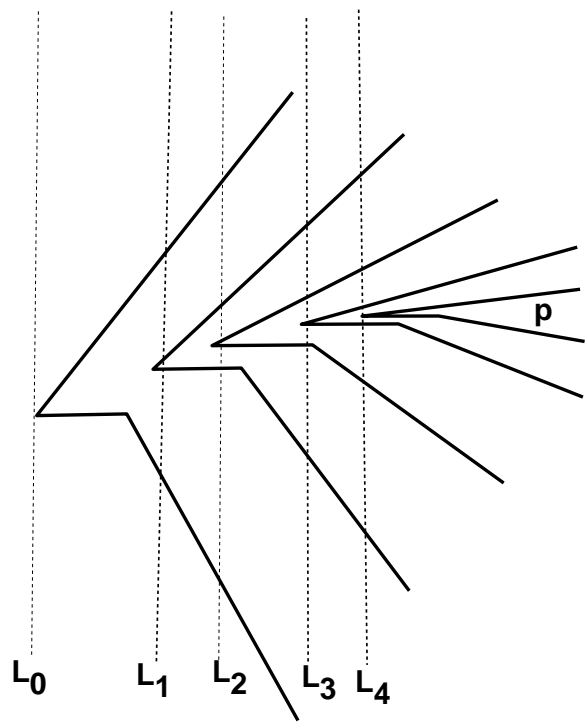}}
\caption
{Accumulating zigzag regions}
\label{fig:cluster}
\end{figure}

\begin{figure}[ht]
\centerline{\epsfxsize=3in \epsfbox{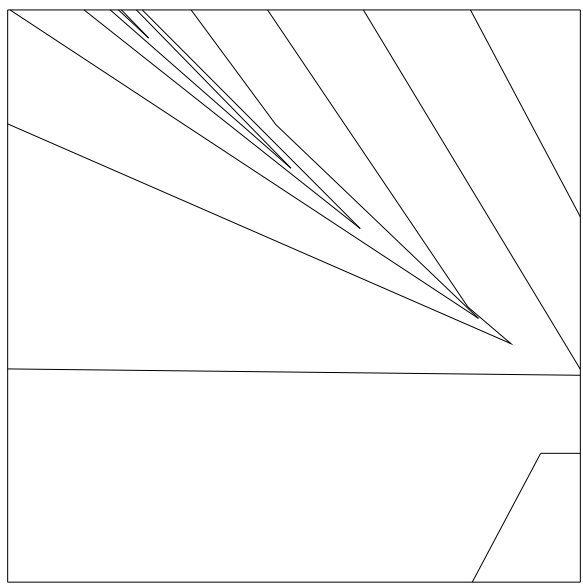}}
\caption
{A sequence of zigzag regions tending to infinity}
\label{fig:seqzz}
\end{figure}
\subsection{Approximating zigzag regions by half-planes}\label{sec:approx}
Now intersect with $P$. 
We 
approximate each zigzag region $\HH_k\cap P$ by a Euclidean
half-plane $\Pi_k\subset P$ containing $p$. These half-planes form
a nested sequence
\begin{equation*}
\Pi_1 \supset \Pi_2 \supset \dots \supset \Pi_k \supset \dots
\end{equation*}
with $\HH_k\cap P\subset \Pi_k$ and we take the lines $L_k=\partial\Pi_k$
to be parallel. Since $p\in\HH_k\cap P$ for all $k$,
and $p\in\partial(\Gamma\bX)$, the parallel lines $L_k$ approach
at $p$ from one side. 
We obtain a contradiction by bounding the Euclidean distance 
$\rho(L_k,L_{k+1})$ from below, for infinitely many $k$.

Here is the detailed construction. 
Let $\nu$ be the line in $P$ parallel to the intersection of $P$ with the
stem of $\partial\HH_0$. Then $\nu$ makes an angle of at most $\pi/4$ with
every ray contained in $\HH_0\cap P$.
Let $L_k\subset P$ be the line perpendicular to $\nu$ bounding a half-plane
$\Pi_k$ containing $\HH_k\cap P$ and intersecting the zigzag
$\zeta_k=\partial\HH_k\cap P$ at a vertex
of $\zeta_k$.

Choose $\delta_0>0$ as in Lemma~\ref{lem:delta0}. 

\begin{lem}\label{lem:tubnbhd}
For any $\delta\le\delta_0$, the tubular neighborhood 
\begin{equation}\label{eq:tubnbhddef}
T_k(\delta) = \gamma_k\left(B(\gamma_k^{-1} L_k,\delta)\right) 
\end{equation}
of $L_k$ is disjoint from $L_{k+1}$.
\end{lem}
\begin{proof}
$L_k\subset P-\HH_k$ implies  
\begin{equation}\label{eq:gammaL}
\gamma_k^{-1}L_k\subset \E -\Hh_{i_k}^{j_k}.  
\end{equation}
Now
\begin{align*}
\E -\Hh_{i_k}^{j_k} & = \E - \gamma_k^{-1}\HH_k \\ 
& = 
\gamma_k^{-1}(\E - \HH_k) \\ 
& \subset 
\gamma_k^{-1}(\E - \HH_{k+1}) \\ 
& = 
\E - \gamma_k^{-1}\gamma_{k+1} \Hh_{i_{k+1}}^{j_{k+1}} \\ 
& = 
\E - h_{i_k}^{j_k} \Hh_{i_{k+1}}^{j_{k+1}}.
\end{align*}
Apply \eqref{eq:gammaL} and \eqref{eq:uniformdistance} to conclude
$B(\gamma_k^{-1},\delta) \subset \E - h_{i_k}^{j_k}
\bar{\Hh}_{i_{k+1}}^{j_{k+1}}$, so 
\begin{align*}
T_k(\delta) & =  \gamma_k B(\gamma_k^{-1}L_k,\delta) \\ 
& \subset 
\E - h_{i_0}^{j_0}h_{i_1}^{j_1}\dots h_{i_{k-1}}^{j_{k-1}} 
(h_{i_k}^{j_k} \bar{\Hh}_{i_{k+1}}^{j_{k+1}}) \\
& = E - \bar{\HH}_{k+1}.
\end{align*}
so  $T_k(\delta)$ is disjoint from $\bar{\HH}_{k+1}$.
Intersecting with $P$, $T_k(\delta)$ is disjoint from $L_{k+1}$.
\end{proof}
\subsection{Bounding the separation of half-planes}
Write $E^k(x)$ for the weak-unstable plane 
$E^{wu}_x(g_k)$ where $x\in\E$ and and $g_k$ is the linear part of $\gamma_k$
(see Definition~\ref{def:weakunstable}). Foliate $T_k (\delta)$ by
leaves $E^k(x)\cap T_k (\delta)$. We first bound the diameter of the
leaves of $T_k(\delta)$.

\begin{lem}\label{lem:wunu} 
The angle between $\nu$ and any
$E^k(x)\cap P$ is bounded by $\pi/4$.
\end{lem}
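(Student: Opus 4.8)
The plan is to pin down the direction of $E^k(x)\cap P$ and then trap it between the two rays of the zigzag $\zeta_0=\Cc_{i_0}^{j_0}\cap P$. First I would record the algebraic description of the weak-unstable plane: since $\B(\xo{g_k},\xp{g_k})=0$ and $\xp{g_k}$ is null, $E^{wu}(g_k)=\mathrm{span}(\xo{g_k},\xp{g_k})=\xp{g_k}^{\perp}$ is the unique null plane tangent to the lightcone along $\R\,\xp{g_k}$, and so depends only on $\xp{g_k}$. By Lemma~\ref{lem:Brouwer} (in the form used in Lemma~\ref{lem:fxpts}) applied to the reduced word $g_k$, the attracting direction $\xp{g_k}$ lies in the interval $A_{i_0}^{j_0}$ directing $\HH_0$; write $\xp{g_k}=\vu_{\phi^*}$ and let $\vu_{\phi_1},\vu_{\phi_2}$ be the endpoints of $A_{i_0}^{j_0}$, so that $\phi_1<\phi^*<\phi_2$ with $\phi_2-\phi_1=\Phi(\HH_0)<\pi/2$. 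As $E^k(x)$ is a translate of $E^{wu}(g_k)$, the line $E^k(x)\cap P$ is parallel to $\ell_{\phi^*}:=\vu_{\phi^*}^{\perp}\cap\vec P$, where $\vec P$ denotes the linear plane parallel to $P$.

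Next I would identify the two rays of the zigzag. The wings of $\Cc_{i_0}^{j_0}$ lie in the null planes $\xp{\vw}^{\perp}$ and $\xm{\vw}^{\perp}$ of the directing vector $\vw$ of $\HH_0$, and $\xp{\vw},\xm{\vw}$ are precisely the endpoints $\vu_{\phi_1},\vu_{\phi_2}$ of $A_{i_0}^{j_0}$. Hence the rays $r_0,r_1$ of $\zeta_0$ have directions $\ell_{\phi_1}$ and $\ell_{\phi_2}$, while $\nu$ has direction $\vw^{\perp}\cap\vec P$. Being the two boundary rays of the zigzag region $\HH_0\cap P$, $r_0$ and $r_1$ each make angle $\le\pi/4$ with $\nu$ by the construction of $\nu$; and since they are the two sides of the zigzag, $\nu$ lies, as a line, strictly between them.

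The remaining and decisive point is that $\ell_{\phi^*}$ lies on the \emph{short} arc between $\ell_{\phi_1}$ and $\ell_{\phi_2}$, the arc containing $\nu$. Granting this, $\ell_{\phi^*}$ lies in the closed angular sector bounded by $r_0$ and $r_1$ and containing $\nu$; since both endpoints are within $\pi/4$ of $\nu$ and $\nu$ separates them, every direction of that sector, in particular $\ell_{\phi^*}$, is within $\pi/4$ of $\nu$, which is the assertion. I expect the short-arc claim to be the main obstacle: a priori the trace $\ell_\phi=\vu_\phi^{\perp}\cap\vec P$ could swing the long way around as $\phi$ crosses $[\phi_1,\phi_2]$. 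To rule this out I would use that $P$ is definite, hence transverse to every null plane $\vu_\phi^{\perp}$, so $\phi\mapsto\ell_\phi$ is continuous; writing $\ell_\phi$ as the direction of $\vec P$ orthogonal to $\mathrm{proj}_{\vec P}(J\vu_\phi)$ with $J=\mathrm{diag}(1,1,-1)$, one checks that $\mathrm{proj}_{\vec P}(J\vu_\phi)$ turns monotonically through a total angle equal to the spread of $r_0,r_1$. The hypothesis $\Phi(\HH_0)<\pi/2$ is exactly what keeps this spread below $\pi/2$, forcing the short arc and preventing any over-winding.
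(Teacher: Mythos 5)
Your opening steps coincide with the paper's: $E^{wu}(g_k)$ is the null plane $\xp{g_k}^{\perp}$, the attracting direction $\xp{g_k}$ lies in $A_{i_0}^{j_0}$ by the Brouwer/ping-pong argument of Lemmas~\ref{lem:Brouwer} and~\ref{lem:fxpts}, and the two rays of the zigzag are indeed parallel to the traces of the null planes of the endpoints of $A_{i_0}^{j_0}$. The gap sits exactly at what you call the decisive point, and your proposed resolution does not close it. Monotonicity of $\phi\mapsto\ell_\phi$ is true (the curve $\mathrm{proj}_{\vec P}(J\vu_\phi)$ is an ellipse enclosing the origin, so the direction map is injective), but monotonicity together with the positions of the endpoints $\ell_{\phi_1},\ell_{\phi_2}$ cannot tell you \emph{which} of the two arcs they bound is swept as $\phi$ runs over $[\phi_1,\phi_2]$: a monotone sweep from $\ell_{\phi_1}$ to $\ell_{\phi_2}$ could just as well traverse the complementary arc, or wind further. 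Your assertion that the total turning ``equals the spread of $r_0,r_1$'' is the short-arc claim itself restated, not something you have reduced it to; and $\Phi(\HH_0)<\pi/2$ does not ``prevent over-winding,'' because for a tilted definite plane $P$ the turning speed of $\ell_\phi$ is not uniform in $\phi$ (it becomes arbitrarily concentrated as $P$ approaches a null plane), so an interval of $\phi$-length less than $\pi/2$ carries no a priori bound on the turning it absorbs. Nothing in your argument pins the sweep to the $\nu$-containing arc.

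The paper closes precisely this hole with an incidence argument that bypasses winding altogether: since $\xp{g_k}$ lies in $A_{i_0}^{j_0}$, the interval of the stem of $\Cc_{i_0}^{j_0}$, the null plane $E^k(x)$ intersects the crooked half-space $\HH_0$ in a half-plane (Figure~\ref{fig:ray1}); hence the line $E^k(x)\cap P$ meets the zigzag region $\HH_0\cap P$ in a ray, and by the construction of $\nu$ in \S\ref{sec:approx} (resting on \S\ref{sec:zigzags}) every ray contained in $\HH_0\cap P$ makes an angle of at most $\pi/4$ with $\nu$. That single containment statement is what your write-up is missing. If you want to keep your framework instead, you would need to show, for example, that the plane spanned by $\vw$ and the timelike line Lorentz-orthogonal to $P$ meets the light cone in exactly one direction lying in the cone over $A_{i_0}^{j_0}$, so that the monotone sweep crosses $\nu$ exactly once on $(\phi_1,\phi_2)$, and then exclude the ``complementary arc plus a full loop'' alternative by a continuity argument deforming $P$ to a horizontal plane. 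Either way, a genuine argument is required where you wrote ``one checks.''
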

\begin{proof} 
By Lemma~\ref{lem:Brouwer}, the vector $\xp{\gamma_k}$ lies in
the attracting interval $A_{i_0}^{j_0}$. The null plane corresponding to
$\xp{g_{i_0}^{j_0}}$ is the weak-unstable plane $E^k(x)$.
Since the stem of the crooked
plane $\Cc_{i_0}^{j_0}$ corresponds to $A_{i_0}^{j_0}$, 
the corresponding null plane $E^k(x)$ intersects $\HH_0 = \Hh_{i_0}^{j_0}$ in
a half-plane.  (Compare Figure~\ref{fig:ray1}.)

Hence the line $E^k(x)\cap P$ meets $\HH_0\cap P$ in a ray. 
Since any ray in $\HH_0\cap P$ subtends an angle of at most 
$\pi/4$ with $\nu$, Lemma~\ref{lem:wunu} follows.
\end{proof} 

\begin{figure}[ht]
\centerline{\epsfxsize=3in \epsfbox{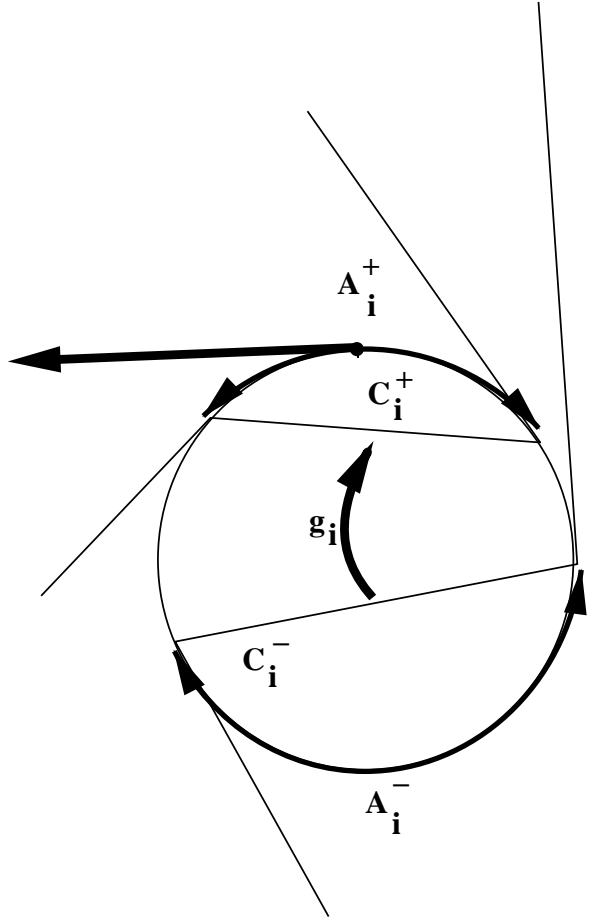}}
\caption{Weak-unstable ray in a crooked half-space }
\label{fig:ray1}
\end{figure}

\begin{lem}\label{lem:separation} 
Let $\epsilon>0$.
If $\gamma_k$ is $\epsilon$-hyperbolic and $\delta < \delta_0$, 
then
\begin{equation*}
\rho(L_k,L_{k+1}) \ge \frac{\delta\epsilon}{4\sqrt{2}}.
\end{equation*}
\end{lem}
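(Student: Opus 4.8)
The plan is to use the Compression Lemma to inscribe, around each point of $L_k$, a Euclidean disk lying in the weak-unstable plane and contained in the tube $T_k(\delta)$, and then to read off from the angle bound of Lemma~\ref{lem:wunu} how far this disk pokes out of $L_k$ in the direction of $\nu$. Since Lemma~\ref{lem:tubnbhd} says $T_k(\delta)$ avoids $L_{k+1}$, that transverse reach becomes a lower bound for $\rho(L_k,L_{k+1})$.

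First I would fix a point $y\in L_k$ and set $x=\gamma_k^{-1}(y)$, so that $x\in\gamma_k^{-1}L_k$ and $y=\gamma_k(x)$. Because $\gamma_k$ is $\epsilon$-hyperbolic, its linear part $g_k$ is an $\epsilon$-hyperbolic linear isometry, and the Compression Lemma~\ref{lem:affinecompression} applies with $h=\gamma_k$, giving
\[
B\!\left(y,\tfrac{\delta\epsilon}{4}\right)\cap E^k(y)\ \subset\ \gamma_k\big(B(x,\delta)\big)\ \subset\ \gamma_k\big(B(\gamma_k^{-1}L_k,\delta)\big)=T_k(\delta),
\]
where the middle inclusion holds because $x$ lies on the line $\gamma_k^{-1}L_k$. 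Thus, in the weak-unstable plane $E^k(y)$ through $y$, the open Euclidean disk of radius $\delta\epsilon/4$ centered at $y$ is contained in $T_k(\delta)$.

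Next I would intersect this disk with $P$. Since $y\in P$ and $y\in E^k(y)$, the intersection is an open chord of the disk through its center $y$, running along the line $\ell_y=E^k(y)\cap P$ and extending a Euclidean distance $\delta\epsilon/4$ to either side of $y$. By Lemma~\ref{lem:wunu} the line $\ell_y$ makes an angle $\theta\le\pi/4$ with $\nu$; since $L_k$ is perpendicular to $\nu$, projecting the chord onto the $\nu$-direction shows that $T_k(\delta)\cap P$ contains points on both sides of $L_k$ whose perpendicular distance from $L_k$ approaches $\tfrac{\delta\epsilon}{4}\cos\theta\ge\tfrac{\delta\epsilon}{4}\cos(\pi/4)=\tfrac{\delta\epsilon}{4\sqrt2}$.

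Finally I would invoke disjointness. As $L_k$ and $L_{k+1}$ are both perpendicular to $\nu$ while $\ell_y$ is not, the chord is transverse to $L_{k+1}$; so if $\rho(L_k,L_{k+1})<\tfrac{\delta\epsilon}{4}\cos\theta$, the point where $\ell_y$ crosses $L_{k+1}$ would lie within the open chord, hence in $T_k(\delta)$, contradicting Lemma~\ref{lem:tubnbhd}. Therefore $\rho(L_k,L_{k+1})\ge\tfrac{\delta\epsilon}{4}\cos\theta\ge\tfrac{\delta\epsilon}{4\sqrt2}$. The one step needing genuine care is the third: converting the radius $\delta\epsilon/4$ measured inside the weak-unstable plane into an honest perpendicular reach in $P$. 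This works only because Lemma~\ref{lem:wunu} keeps $\theta$ bounded away from $\pi/2$ — were $\ell_y$ allowed to lie parallel to $L_k$, the inscribed disk would extend only along $L_k$ and give no separation at all — and it is exactly the $\pi/4$ bound that produces the factor $1/\sqrt2$; the remainder is a direct assembly of the Compression Lemma and the tube-disjointness lemma.
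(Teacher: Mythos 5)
Your proposal is correct and follows essentially the same route as the paper's proof: apply the Compression Lemma~\ref{lem:affinecompression} with $h=\gamma_k$ at a point of $\gamma_k^{-1}L_k$ to place a weak-unstable disk of radius $\delta\epsilon/4$ centered on $L_k$ inside $T_k(\delta)$, invoke Lemma~\ref{lem:tubnbhd} for disjointness from $L_{k+1}$, and use the $\pi/4$ angle bound of Lemma~\ref{lem:wunu} to convert the disk's radius into a perpendicular reach of at least $\frac{\delta\epsilon}{4\sqrt{2}}$ toward $L_{k+1}$. Your write-up is if anything slightly more careful than the paper's, which blurs the distinction between $x\in\gamma_k^{-1}(L_k)$ and $\gamma_k(x)\in L_k$ in its notation.
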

\begin{proof}
Apply the Compression Lemma~\ref{lem:CompressionLemma} with 
$x\in\gamma_k^{-1}(L_k)$ and $h=\gamma_k$ to obtain
\begin{equation*}
B \Big( 
L_k,\frac{\delta\epsilon}4 
\Big) 
\cap E^k(x) \subset 
\gamma_kB(\gamma_k^{-1}L_k,\delta) = T_k(\delta).
\end{equation*}
Lemma~\ref{lem:tubnbhd} implies that the tubular neighborhood 
$T_k (\delta)$ is disjoint from $L_{k+1}$.
Therefore
$B(L_k,\delta\epsilon/4) \cap E^k(x)$ is disjoint from $L_{k+1}$ and 
\begin{equation}\label{eq:1}
\rho\left(x,\partial T_k (\delta)\right) 
\le  \rho(x,L_{k+1}) \le \rho(L_k,L_{k+1}).
\end{equation}
Lemma~\ref{lem:wunu} implies $\angle\left(\nu,E^k(x)\cap P\right) < \pi/4$ so 
\begin{equation*}
\cos\angle\left(\nu,E^k(x)\cap P\right) > \frac{1}{\sqrt{2}}.
\end{equation*}
Thus (compare Figure~\ref{fig:tubnbhd})
\begin{equation}\label{eq:2}
\rho \left(x,\partial T_k (\delta)\right) = \rho\left(x,\partial B\Big(L_k,\frac{\delta\epsilon}4\Big)\cap E^k(x)\right)\cos\angle\left(\nu,E^k(x)\cap P\right) > \frac{\delta\epsilon}{4\sqrt{2}}.
\end{equation}
Lemma~\ref{lem:separation} follows from \eqref{eq:1} and \eqref{eq:2}.
\end{proof}
\begin{figure}[htb]
\centerline{\epsfxsize=3in \epsfbox{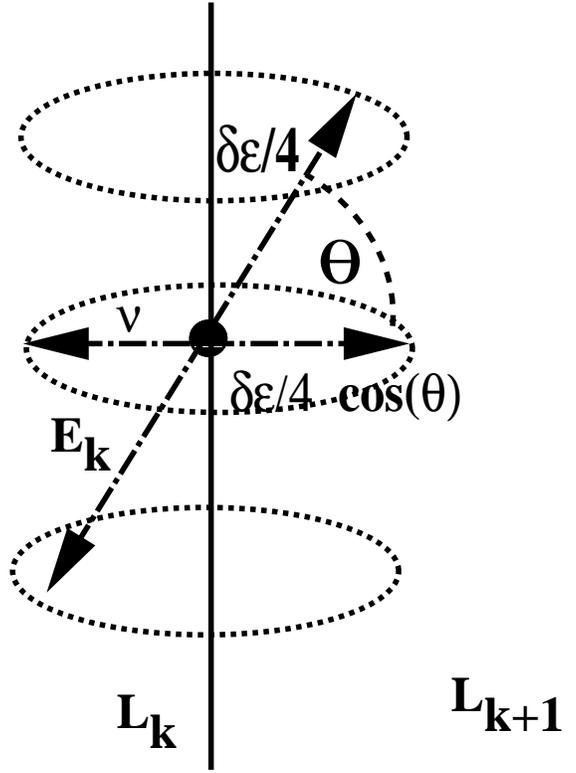}}
\caption
{Separating the linear approximations}
\label{fig:tubnbhd}
\end{figure}

\subsection{The alternative to $\epsilon$-hyperbolicity}
\label{sec:alternative}

By Lemma~\ref{lem:separation}, it suffices to find $\epsilon>0$ such that
for infinitely many $k$, the element $\gamma_k$ is $\epsilon$-hyperbolic.
Lemma~\ref{lem:fxpts} gives a criterion for $\epsilon$-hyperbolicity
in terms of the expression of $\gamma_k$ as a reduced word.

Choose $\epsilon_0$ as in Lemma~\ref{lem:fxpts} and
the sequence $(i_0,j_0),\dots, (i_k,j_k),\dots$ as in 
Lemma~\ref{lem:nestedsequence}. 
Recall that $\gamma_k$ has the expression
\begin{equation*}
\gamma_k  = h_{i_0}^{j_0}\dots h_{i_{k-1}}^{j_{k-1}}
\end{equation*}
where 
$(i_{k+1},j_{k+1})\neq (i_k,-j_k)$ for all $k\ge 0$.
Lemma~\ref{lem:fxpts} implies:

\begin{lem}\label{lem:notehyp}
Either $g_k$ is $\epsilon_0$-hyperbolic for infinitely many $k$ or
there exists $k_2>0$ such that $(i_k,j_k)= (i_0,-j_0)$ for all $k>k_2$.
We may assume that $k_2$ is minimal, that is, 
$(i_{k_2},j_{k_2})\neq (i_0,-j_0)$.
\end{lem}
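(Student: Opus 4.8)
The plan is to read the statement off directly from the Hyperbolicity Criterion (Lemma~\ref{lem:fxpts}) applied to the linear parts $g_k = \L(\gamma_k)$, combined with an elementary logical dichotomy. First I would record that
\begin{equation*}
g_k = g_{i_0}^{j_0} g_{i_1}^{j_1} \dots g_{i_{k-1}}^{j_{k-1}},
\end{equation*}
and that this is a \emph{reduced} word: the sequence produced by Lemma~\ref{lem:nestedsequence} satisfies $(i_n,j_n)\neq(i_{n+1},-j_{n+1})$ for all $n\ge 0$, which is exactly the condition that adjacent letters $g_{i_n}^{j_n}g_{i_{n+1}}^{j_{n+1}}$ do not cancel. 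I would also note that, by the definition of $\epsilon$-hyperbolicity for affine isometries, $\gamma_k$ is $\epsilon_0$-hyperbolic if and only if its linear part $g_k$ is.

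Next I would apply Lemma~\ref{lem:fxpts} with $l = k-1$. The reduced word $g_k$ is \emph{cyclically} reduced precisely when its last letter does not cancel its first, that is, when $(i_{k-1},j_{k-1})\neq(i_0,-j_0)$, and in that case Lemma~\ref{lem:fxpts} yields that $g_k$ is $\epsilon_0$-hyperbolic. The only subtlety here is the index shift: the hyperbolicity of $g_k$ is governed by the letter at position $k-1$, not at position $k$.

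Now I would set up the dichotomy by negating the second alternative. Suppose there is no $k_2$ with $(i_k,j_k)=(i_0,-j_0)$ for all $k>k_2$; then $(i_n,j_n)\neq(i_0,-j_0)$ for infinitely many $n$. For each such $n$, setting $k = n+1$ and invoking the previous paragraph shows that $g_k$ is $\epsilon_0$-hyperbolic. Hence $g_k$ is $\epsilon_0$-hyperbolic for infinitely many $k$, which is the first alternative. This establishes that at least one of the two alternatives must hold.

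Finally, for the minimality clause I would choose $k_2$ to be the least index for which $(i_k,j_k)=(i_0,-j_0)$ holds for \emph{every} $k>k_2$. If $(i_{k_2},j_{k_2})=(i_0,-j_0)$ were also true, then $k_2-1$ would satisfy the same property, contradicting minimality; hence $(i_{k_2},j_{k_2})\neq(i_0,-j_0)$, as required. There is no genuine obstacle in this argument---it is a direct corollary of Lemma~\ref{lem:fxpts}---so the only thing to watch is the bookkeeping between the word-length index $k$ and the letter index $n = k-1$.
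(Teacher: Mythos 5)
Your proof is correct and takes essentially the same route as the paper, which offers no separate argument at all but simply asserts that Lemma~\ref{lem:fxpts} implies the statement: you apply the hyperbolicity criterion to the reduced words $g_k$ coming from Lemma~\ref{lem:nestedsequence} and negate the second alternative to obtain the first. Your bookkeeping of the index shift (the last letter of $g_k$ has index $k-1$, so $l=k-1$ in Lemma~\ref{lem:fxpts}) and your verification of the minimality clause for $k_2$ just make explicit what the paper leaves implicit.
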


Thus $g_k$ has the special form
\begin{equation*}
g_k =  (g_{i_0}^{j_0})^{k_1} g' (g_{i_0}^{-j_0})^{k - k_1 - k_2 - 1} 
\end{equation*}
where $k_1>0$ is the smallest $k$ such that $(i_k,j_k)\neq(i_0,j_0)$ and
$g'$ is the subword
\begin{equation*}
g_{i_{k_1}}^{j_{k_1}} \dots g_{i_{k_2}}^{j_{k_2}}.
\end{equation*}
$g'$ is the maximal subword of $g_k$ which 
neither begins
with $g_{i_0}^{j_0}$
nor end 
ends with $g_{i_0}^{-j_0}$. In particular the conjugate of $g_k$
by $\psi= g_{i_0}^{-j_0k_1}$
\begin{equation*}
g_{i_0}^{-j_0k_1} g_k g_{i_0}^{j_0k_1} = g' (g_{i_0}^{-j_0})^{k - k_2 - 1} 
\end{equation*}
is $\epsilon_0$-hyperbolic, by Lemma~\ref{lem:fxpts}.

\subsection{Changing the hyperbolicity}\label{sec:change}
The proof concludes by showing that there is a $K>1$ depending
on $g_{i_0}^{-j_0k_1}$ and taking $\epsilon$ smaller than
$\epsilon_0/K$, infinitely many $g_k$ are $\epsilon$-hyperbolic for 
this new choice of $\epsilon$. 
This contradiction concludes the proof of the theorem.

\begin{lem}\label{lem:conjhyp}
Let $\psi\in\SOoo$.
Then there exists $K$ such that, for any $\epsilon>0$,
an element $g\in\SOoo$ 
is $\epsilon/K$-hyperbolic whenever $\psi g\psi^{-1}$ is 
$\epsilon$-hyperbolic. 
\end{lem}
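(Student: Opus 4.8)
The plan is to reduce the statement to a bi-Lipschitz estimate for the action $\P(\psi)$ on the circle $S^1$. Recall that the hyperbolicity of a hyperbolic element $g\in\SOoo$ is by definition the Euclidean distance $\rho(\xp{g},\xm{g})$ between its attracting and repelling fixed points on $S^1$. Thus, writing $h=\psi g\psi^{-1}$, everything comes down to comparing $\rho(\xp{h},\xm{h})$ with $\rho(\xp{g},\xm{g})$, and the constant $K$ will simply be a distortion constant for $\P(\psi)$ depending only on $\psi$.

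First I would identify the fixed points of $h$ in terms of those of $g$. Since conjugation preserves eigenvalues, if $\vw$ is the eigenvector of $g$ with eigenvalue $\lambda^{-1}>1$, then $\psi\vw$ is an eigenvector of $h=\psi g\psi^{-1}$ with the same eigenvalue $\lambda^{-1}$; likewise $\psi$ carries the $\lambda$-eigenline of $g$ to that of $h$. Passing to the projectivized action on $S^1\approx\P(\nn)$, this says exactly that $\xp{h}=\P(\psi)(\xp{g})$ and $\xm{h}=\P(\psi)(\xm{g})$, the attracting point going to the attracting point and the repelling to the repelling.

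Next I would establish that $\P(\psi)\colon S^1\to S^1$ is bi-Lipschitz for the Euclidean metric $\rho$. The action of $\SOoo$ on $S^1$ is smooth, so $\P(\psi)$ is a diffeomorphism of the compact one-manifold $S^1$; in the angular coordinate $\phi$ of \S\ref{sec:Euclid} its derivative is continuous and nowhere zero, hence bounded in absolute value between positive constants. Since on $S^1$ the chord distance $\rho$ and the arc-length distance $\Phi$ are comparable (indeed $\rho=2\sin(\Phi/2)$ for $\Phi\le\pi$), it follows that there is a constant $K\ge1$, depending only on $\psi$, with
\begin{equation*}
\frac1K\,\rho(u,w)\;\le\;\rho\bigl(\P(\psi)(u),\P(\psi)(w)\bigr)\;\le\;K\,\rho(u,w)
\end{equation*}
for all $u,w\in S^1$.

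Finally I would combine the two steps. Assuming $h=\psi g\psi^{-1}$ is $\epsilon$-hyperbolic, the upper Lipschitz bound applied to $u=\xp{g}$, $w=\xm{g}$ gives
\begin{equation*}
\epsilon\;\le\;\rho(\xp{h},\xm{h})\;=\;\rho\bigl(\P(\psi)(\xp{g}),\P(\psi)(\xm{g})\bigr)\;\le\;K\,\rho(\xp{g},\xm{g}),
\end{equation*}
so $\rho(\xp{g},\xm{g})\ge\epsilon/K$ and $g$ is $\epsilon/K$-hyperbolic, as required; note that $K$ is independent of both $\epsilon$ and $g$. The main point requiring care is the verification that $\P(\psi)$ is genuinely a bi-Lipschitz homeomorphism of $S^1$ --- that is, checking that the projectivized action is smooth with nonvanishing derivative and correctly relating the ambient Euclidean distance $\rho$ to the angular metric on $S^1$ --- since once this distortion bound is in hand the conclusion is immediate.
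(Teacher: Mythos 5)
Your proposal is correct, and its overall architecture is the same as the paper's: both reduce the lemma, via the equivariance $\xpm{\psi g\psi^{-1}}=\P(\psi)\bigl(\xpm{g}\bigr)$ of the fixed points under conjugation, to a bi-Lipschitz distortion estimate for the circle action of $\psi$ in the chord metric $\rho$, and both pass through the chord--arc comparison $\rho=2\sin(\Phi/2)$ with $\tfrac2\pi\le 2\sin(\phi/2)/\phi\le 1$. Where you differ is in how the distortion bound is proved. You argue softly: $\P(\psi)$ is a diffeomorphism of the compact manifold $S^1$, so its angular derivative is continuous and nonvanishing, hence pinched between positive constants, and some $K=K(\psi)$ exists. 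The paper instead makes the constant explicit: it writes $\psi=R_\theta\tau_s R_{\theta'}$ using the Cartan decomposition $\SOoo=\SOt\cdot\SOooo\cdot\SOt$, notes that the rotations act as isometries of $(S^1,\rho)$ so only the transvection $\tau_s$ matters, and computes
\begin{equation*}
\frac{d\phi}{\tau_s^*d\phi}=\frac{1+\cos\phi}{2}e^{s}+\frac{1-\cos\phi}{2}e^{-s}\in[e^{-s},e^{s}],
\end{equation*}
yielding $K=e^{s}\pi/2$ where $s=d(O,\psi(O))$ is the hyperbolic distance $\psi$ moves the origin of $\rht$. What the explicit route buys is a quantitative handle on $K$ in terms of $\psi$ alone; what your route buys is brevity and independence from the Cartan decomposition. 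Since the application in \S\ref{sec:change} only uses the existence of some $K$ depending on the conjugating element $\psi=g_{i_0}^{-j_0k_1}$, your soft argument suffices for the Main Theorem. Your identification of the delicate point --- that $\P(\psi)$ is genuinely smooth with nonvanishing derivative, which follows from the formula $\P(\psi)(\vu)=\psi(\vu)/(\psi(\vu))_3$ on the cone section --- is the right thing to check, and your use of only the upper Lipschitz bound in the final step is exactly what the conclusion requires.
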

\begin{proof}
Let $s$ denote the distance $d(O,\psi(O))$ that $s$ moves the origin 
$O\in\rht$ (see \eqref{eq:rhtorigin}) and let 
\begin{equation*}
K  = e^s \pi/2.  
\end{equation*}

Since $\xpm{\psi g\psi^{-1}} = \psi\left(\xpm{g}\right)$,
it suffices to prove that if 
$\va_1,\va_2\in S^1$, then
\begin{equation}\label{eq:distortion}
K^{-1} \le \frac{\rho(\psi(\va_1),\psi(\va_2))}{\rho(\va_1,\va_2)}  \le K.
\end{equation}

Let $\SOt$ be the group of rotations and $\SOooo$ the group of
{\em transvections\/}
\begin{equation*}
\tau_s = 
\bmatrix 
1 & 0  & 0  \\
0 & \cosh(s) & \sinh(s) \\
0 & \sinh(s) & \cosh(s) \endbmatrix.
\end{equation*}
Since $\SOt\subset\SOoo$ is a maximal compact subgroup and $\SOooo\subset\SOoo$
is an $\R$-split Cartan subgroup, the Cartan decomposition of $\SOoo$
is
\begin{equation*}
\SOoo = \SOt \cdot \SOooo \cdot \SOt
\end{equation*}
and we write $\psi = R_\theta \tau_s R_{\theta'}$
where $s = d(O,\psi(O))$ as above. Since 
\begin{equation*}
\rho(\psi(\va_1),\psi(\va_2)) = 
\rho(
\tau_s(R_{\theta'}(\va_1)),
\tau_s(R_{\theta'}(\va_2))) 
\end{equation*}
and 
\begin{equation*}
\rho(\va_1,\va_2) = 
\rho(
R_{\theta'}(\va_1),
R_{\theta'}(\va_2)),
\end{equation*}
it suffices to prove \eqref{eq:distortion} for $\psi = \tau_s$.

In this case 
\begin{equation*}
\frac{d\phi}{\psi^*d\phi} = \frac{1+\cos{\phi}}{2}e^{s} +  
\frac{1-\cos{\phi}}{2}e^{-s}
\end{equation*}
so that 
\begin{equation*}
e^{-s} \le \frac{d\phi}{\psi^*d\phi} \le e^{s}.
\end{equation*}
Let $A$ be the interval 
on $S^1$ joining $\va_1$ to $\va_2$, such that $\Phi(A)\le\pi$. Its length and the length
of its image $\psi(A)$ are given by:
\begin{equation*}
\Phi(A) = \int_A \vert d\phi\vert, \qquad \Phi(\psi(A)) = \int_{\psi(A)} \vert
d\phi\vert
= \int_A \psi^*(\vert d\phi\vert).
\end{equation*}
Therefore
\begin{equation}\label{eq:Riemdistortion}
e^{-s} \le  \frac{\Phi(A)}{\Phi(\psi(A))} \le e^{s}. 
\end{equation}
Finally the distance $\rho(\va_1,\va_2)$ on $S^1$ (the length of the chord
jointing $\va_1$ to $\va_2$)
relates to the Riemannian distance by 
$\rho(\va_1,\va_2) = 2 \sin (\Phi(A)/2). 	$
Now (since $-\pi \le \phi \le \pi$)
\begin{equation*}
\frac2\pi \le \frac{2\sin(\phi/2)}\phi \le 1,
\end{equation*}
implies
\begin{equation}\label{eq:RiemChord1}
\frac2\pi \le\frac{\rho(\va_1,\va_2)}{\Phi(A)}
\le 1
\end{equation}
and
\begin{equation}\label{eq:RiemChord2}
1 \le \frac{\Phi(\psi(A))}{\rho(\psi(\va_1),\psi(\va_2))} \le \frac{\pi}2.
\end{equation}
Combining inequalities
\eqref{eq:Riemdistortion} with \eqref{eq:RiemChord1} and
\eqref{eq:RiemChord2} implies \eqref{eq:distortion}.
\end{proof}

\makeatletter \renewcommand{\@biblabel}[1]{\hfill#1.}\makeatother

\begin{figure}[ht]
\centerline{\epsfxsize=3in \epsfbox{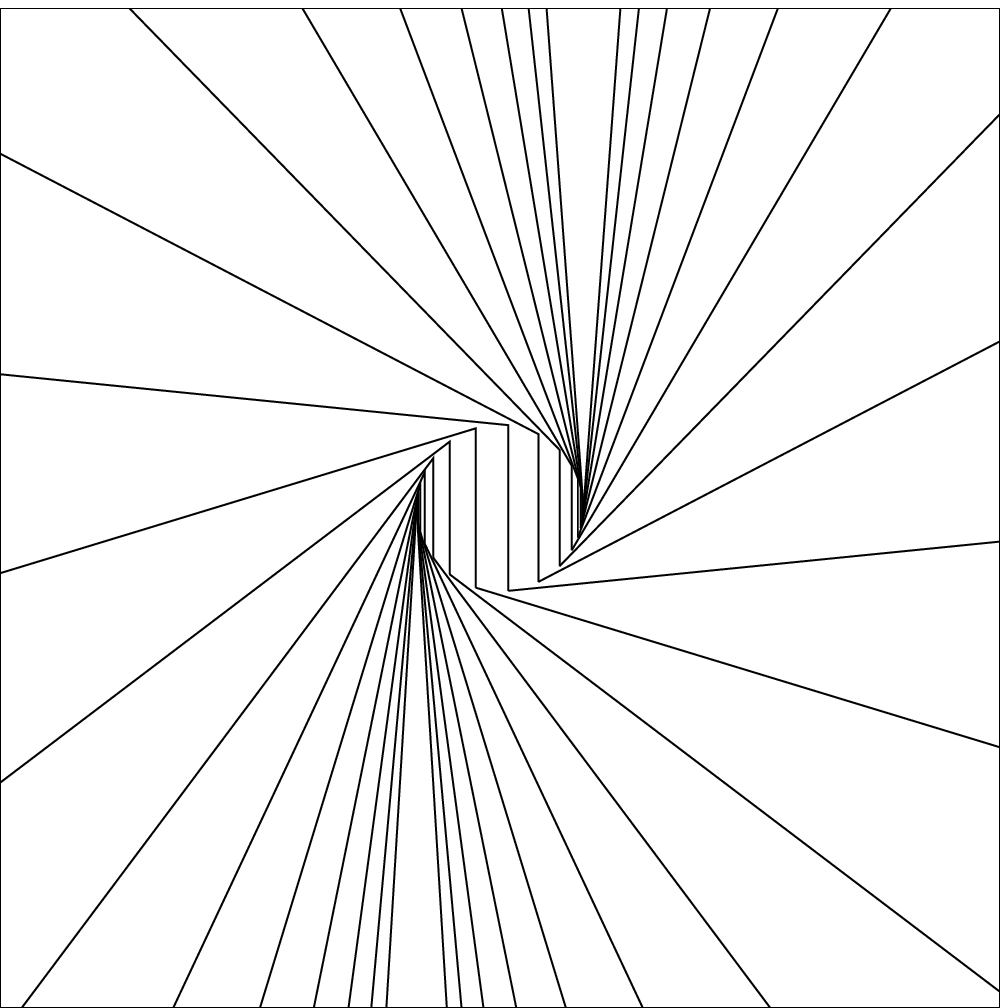}}
\caption
{Zigzags for a linear hyperbolic cyclic group}
\label{fig:lcycliczz}
\end{figure}

\begin{figure}[ht]
\centerline{\epsfxsize=3in \epsfbox{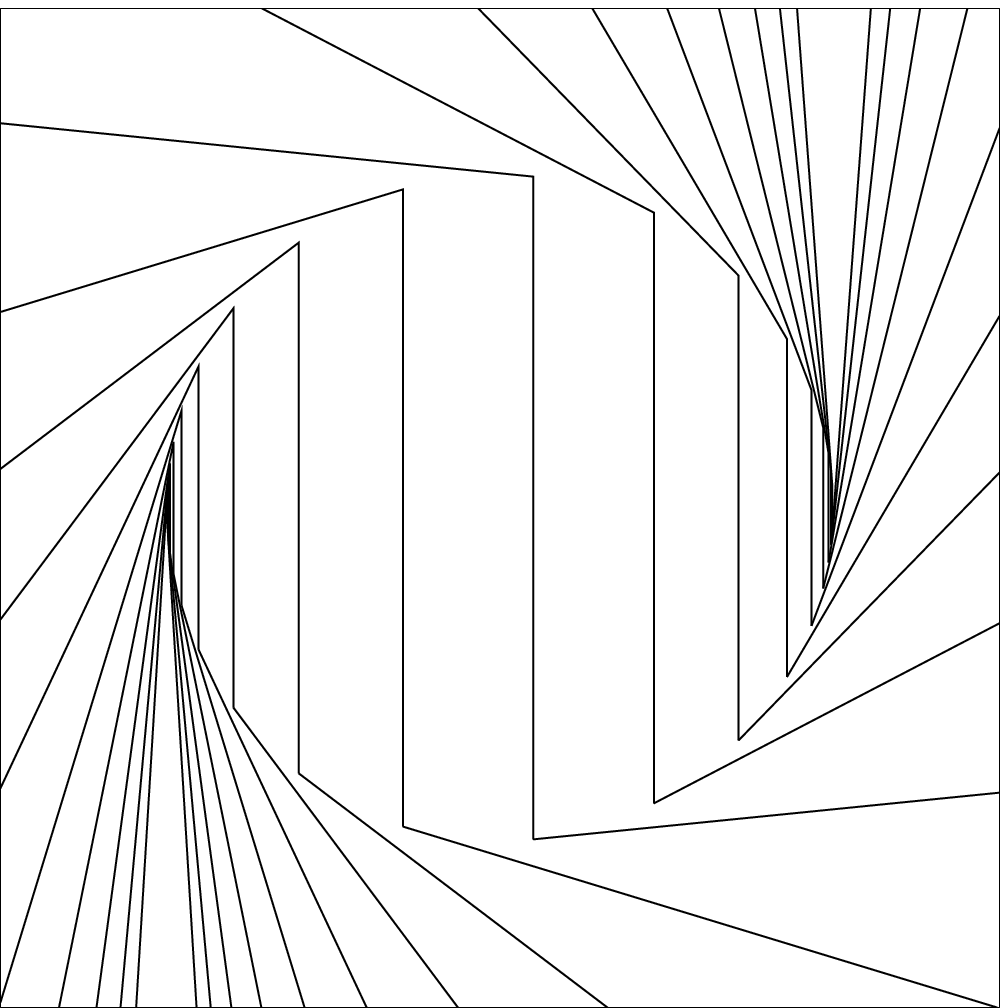}}
\caption
{Zigzags for a linear hyperbolic cyclic group:close-up}
\label{fig:biglin}
\end{figure}

\begin{figure}[ht]
\centerline{\epsfxsize=3in \epsfbox{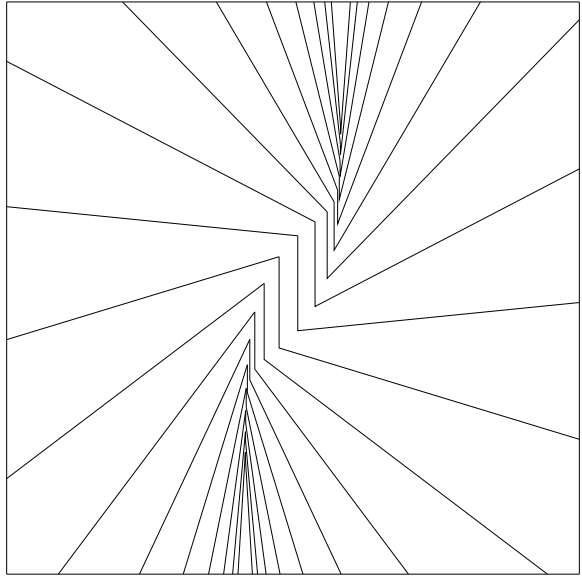}}
\caption
{Zigzags for an affine hyperbolic cyclic group}
\label{fig:acycliczz}
\end{figure}

\begin{figure}[ht]
\centerline{\epsfxsize=3in \epsfbox{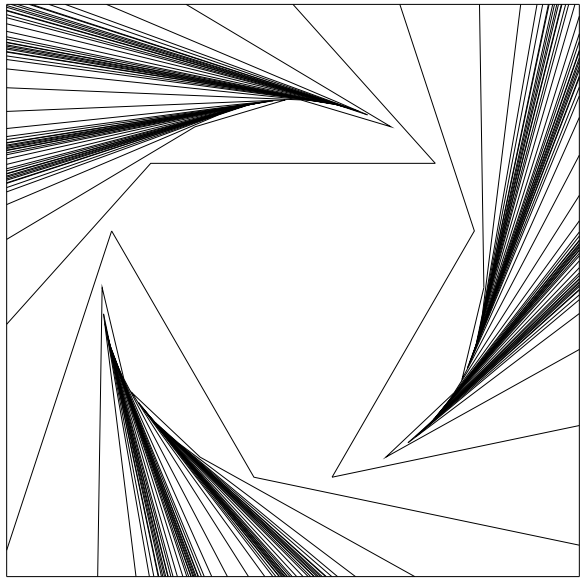}}
\caption
{Tiling by a discrete linear group}
\label{fig:linearzzs}
\end{figure}

\begin{figure}[ht]
\centerline{\epsfxsize=3in \epsfbox{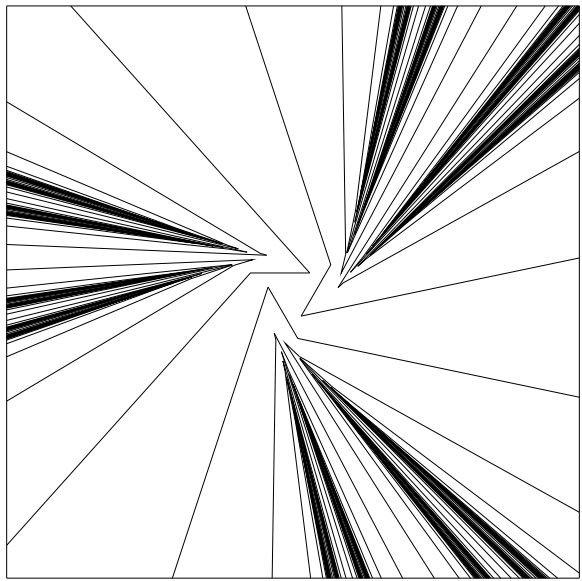}}
\caption
{Zigzags for an ultra-ideal triangle group: Remote view}
\label{fig:ultraideal1}
\end{figure}

\begin{figure}[ht]  
\centerline{\epsfxsize=3in \epsfbox{}}
\caption
{Zigzags for an ultra-ideal triangle group: Close-up view}
\label{fig:ultraideal2}
\end{figure}

\begin{thebibliography}{10}

\bibitem{Beardon}    
			Beardon, A.F.\ 
			{\em The Geometry of Discrete Groups,\/}
			Graduate Texts in Mathematics {\bf 91},
			Springer-Verlag, New York (1983).
\bibitem{CDGM}		
			Charette, V., Drumm, T., Goldman, W. and Morrill, M.,
			{\em Flat Lorentz Manifolds : A Survey},
			Proc.\ A.\ Besse Round Table on 
			Global Pseudo-Riemannian Geometry, (to appear).
\bibitem{Drumm0}	
			Drumm, T.,
			{\em Fundamental polyhedra for 
			Margulis space-times,\/}
			Doctoral Dissertation, University of Maryland (1990).
\bibitem{Drumm1}	
			\bysame
			{\em Fundamental polyhedra for 
			Margulis space-times,\/}
			Topology {\bf 31} (4) (1992), 677-683.

\bibitem{Drumm2}	
			\bysame,
			{\em Linear holonomy of Margulis space-times,\/}
			J.Diff.Geo.\ {\bf 38} (1993), 679--691.
\bibitem{Drumm4}	
			\bysame,
			{\em Margulis space-times,\/}
			Proc.\ Symp.\ Pure Math.\ {\bf 54} (1993), Part 2,
			191--195

\bibitem{DrummGoldman1}	
			Drumm, T.\  and Goldman, W.,
			{\em Complete flat Lorentz 3-manifolds 
			with free fundamental group,\/} 
			Int.\ J.\ Math. {\bf 1} (1990), 149--161. 

\bibitem{DrummGoldman2}	
			\bysame,
			{\em The geometry of crooked planes,\/}
			Topology {\bf 38} (2) (1999), 323-351.
\bibitem{ERA}		
			\bysame,
			{\em Crooked Planes,\/} 
			Electronic Research Announcements of 
			the Amer.\ Math.\ Soc. {\bf 1} (1) (1995), 10--17.
\bibitem{EpsteinPetronio}
			Epstein, D.\ and Petronio, C.,
			{\em An exposition of Poincar\'e's polyhedron theorem,\/}
			L'ens.\ Math.\ {\bf 40\/} (1994), 113--170.
\bibitem{Ford}  	
			Ford, L.R., 
			{\em Automorphic Functions,} Chelsea, New York (1929).

\bibitem{FriedGoldman}	
			Fried, D.\ and Goldman, W., 			
			{\em Three-dimensional affine crystallographic 
			groups,\/}
			Adv.\ Math. {\bf 47} (1983), 1--49.

\bibitem{GoldmanMargulis}	
			Goldman, W.\ and Margulis, G.,
			{\em Flat Lorentz 3-manifolds and cocompact
			Fuchsian groups,\/} (these proceedings).

\bibitem{Margulis1}	Margulis, G.,
	                {\em Free properly discontinuous groups of
	                affine transformations,\/}
	                Dokl.\ Akad.\ Nauk SSSR {\bf 272} (1983), 937--940.
\bibitem{Margulis2}	
			\bysame,
			{\em Complete affine locally flat manifolds with a 
			free fundamental group,\/}
			J.\ Soviet Math. {\bf 134} (1987), 129--134.
\bibitem{Milnor}	
			Milnor, J.,
			{\em On fundamental groups of complete affinely flat 
			manifolds,\/}
			Adv.\ Math. {\bf 25} (1977), 178--187.

\bibitem{Ratcliffe}	Ratcliffe, J.,
			``Foundations of Hyperbolic Manifolds,''
			Graduate Texts in Mathematics {\bf 149} (1994)
			Sringer-Verlag, Berlin-Heidelberg-New York.


\end{thebibliography}
\end{document}